\documentclass[11pt,reqno]{amsart}

\title[Prandtl and hydrostatic Euler equations]{On the local well-posedness of the Prandtl and the hydrostatic Euler equations with multiple monotonicity regions}
\date{\today}
\author{Igor Kukavica}
\address{University of Southern California, Los Angeles, CA 90089}
\email{kukavica@usc.edu}

\author{Nader Masmoudi}
\address{Courant Institute, New York University, New-York, NY 10012.}
\email{masmoudi@cims.nyu.edu}

\author{Vlad Vicol}
\address{Princeton University, Princeton, NJ 08544}
\email{vvicol@math.princeton.edu}

\author{Tak Kwong Wong}
\address{University of Pennsylvania, Philadelphia, PA 19104}
\email{takwong@math.upenn.edu}

\usepackage[margin=1in]{geometry}

\usepackage{amsfonts,amsmath,latexsym,amssymb,verbatim,amsbsy,amsthm,mathrsfs}

\usepackage{times}

\usepackage{hyperref}

\usepackage[usenames,dvipsnames]{xcolor}

\def\indeq{\qquad}

\theoremstyle{plain}
\newtheorem{theorem}{Theorem}[section]

\newtheorem{lemma}[theorem]{Lemma}

\theoremstyle{definition}
\newtheorem{remark}[theorem]{Remark}

\def\tilde{\widetilde}
\numberwithin{equation}{section}

\def\RR{\mathbb R}
\def\ZZ{\mathbb Z}
\def\HH{\mathbb H}
\def\DD{\mathcal D}
\def\DDA{\DD_{\rm a}}
\def\DDM{\DD_{\rm m}}
\def\AA{\mathcal A}
\def\eps{\varepsilon}

\def\dyinv{\partial_y^{-1}}

\def\bu{\bar u}
\def\by{\bar y}
\def\dbyinv{\partial_{\by}^{-1}}

\def\tom{\tilde \omega}
\def\tu{\tilde u}

\def\dx{\partial_x}

\def\dy{\partial_y}
\def\dyy{\partial_{yy}}
\def\dt{\partial_{t}}

\begin{document}

\baselineskip=15pt

\begin{abstract}
We find a new class of data for which the Prandtl boundary layer equations and the hydrostatic Euler equations are locally in time well-posed. In the case of the Prandtl equations, we assume that the initial datum
$u_0$ is monotone on a number of intervals (on some strictly increasing on some strictly decreasing) and analytic on the complement and show that the local existence and uniqueness hold. The same is true for the hydrostatic Euler equations except that we assume this for the vorticity $\omega_0=\partial_y u_0$.
\end{abstract}


\subjclass[2000]{35Q35, 76D03, 76D10}
\keywords{Navier-Stokes equations, Euler equations, Inviscid limit, Boundary layer, Prandtl equations, Hydrostatic balance.}

\maketitle

\section{Introduction}\label{sec:intro}

In this paper we address the local in time well-posedness for two systems of partial differential equations which arise when considering singular limits in fluid mechanics: the Prandtl boundary layer equations and the hydrostatic Euler equations.
The common features of these systems are:
\begin{itemize}
 \item they are {formally} derived using singular asymptotic expansions,
 \item well-posedness theory established so far is rigid:
the existence works were obtained in the class of real-analytic functions or under certain monotonicity/convexity assumptions, and
 \item fundamental instabilities make the systems ill-posed in Sobolev spaces.
\end{itemize}

\subsection{The Prandtl Equations} 
\label{sec:Prandtl}
When considering the inviscid limit of the 2D~Navier-Stokes equations for the velocity field $(u^{NS},v^{NS})$ on a bounded domain with Dirichlet boundary conditions, one is faced with the fundamental difficulty of mismatch between the boundary conditions of the viscous flow ($u^{NS} (x,y,t) = v^{NS}(x,y,y) = 0$ on the boundary) and the Euler flow ($v^E(x,y,t)=0$ on the boundary). We refer the reader for example to~\cite{ConstantinWu95,ConstantinWu96}
for vanishing viscosity results in domains without boundary,
\cite{Kato84b,TemamWang97b,Masmoudi98,Wang01,Kelliher07,Masmoudi07,LopesMazzucatoLopes08,
LopesMazzucatoLopesTaylor08,MazzucatoTaylor08,Maekawa12}
for vanishing viscosity results with the Dirichlet boundary conditions,
and \cite{BeiraoCrispo10,BeiraoCrispo12,MasmoudiRousset12}
for the Navier type conditions.

To overcome this difficulty, Prandtl~\cite{Prandtl1904}
introduced an ansatz 
$(u^{NS},v^{NS})(x,y,t) \approx (u^{E},v^{E})(x,y,t)$
valid away from a thin neighborhood of the boundary, called the
boundary layer.
Inside the boundary layer, in order to account for the large gradients in the normal direction, the Navier-Stokes flow should asymptotically behave as $(u^{NS},v^{NS})(x,y,t) \approx (u,\sqrt{\nu} v)(x,y/\sqrt{\nu},t)$, where $\nu$ is the kinematic viscosity.
Retaining the leading order terms in this matched asymptotic expansion, one formally arrives at the Prandtl equations (see, e.g.~\cite{Oleinik66,OligerSundstrom78,EEngquist97,OleinikSamokhin99,SammartinoCaflisch98a,CaflischSammartino00,E00,HongHunter03,GuoNguyen10,Grenier00,XinZhang04,GarganoSammartinoSciacca09,GerardVaretDormy10,GerardVaretNguyen12,MasmoudiWong12a,MasmoudiWong12b,AlexandreWangXuYang12} for results on the Prandtl equations).

The two dimensional Prandtl equations for the unknown (tangential) velocity $u= u(x,y,t)$ in the boundary layer read
\begin{align}
  & \partial_t u - \dyy u + u \dx u + v \dy u = - \dx P \label{eq:P:1}\\
  & \dx u + \dy v = 0\label{eq:P:2}\\
  & \dy P = 0 \label{eq:P:3}
\end{align} in $\HH = \{(x,y)  \in {\mathbb R}^2 : y>0\}$, where $y$ is the normal variable in the boundary layer. The equations \eqref{eq:P:1}--\eqref{eq:P:3} are augmented with the no-slip and the no-influx boundary conditions
\begin{align}
&  u(x,y,t)|_{y=0} = v(x,y,t)|_{y=0}  = 0 \label{eq:P:4}
\end{align} for $t>0$, and the matching conditions with the Euler flow as $y\rightarrow \infty$, via the Bernoulli law
\begin{align}
  &\lim_{y\rightarrow \infty} u(x,y,t) = U(x,t)\label{eq:P:5}\\
  &\dx P(x,t) = - (\partial_t + U(x,t)  \dx) U(x,t)\label{eq:P:6}
\end{align}
for $x\in {\mathbb R}$, $t>0$, where $U(x,t)$ is given by the trace at $y=0$ of the tangential component of the underlying Euler flow. Note that the vertical component of the velocity $v=v(x,y,t)$ is determined from $u$ (it is a {\em diagnostic variable}) via \eqref{eq:P:2} and \eqref{eq:P:4}:
\begin{align}
v(x,y,t) = - \dyinv \dx u(x,y,t)
\label{eq:dyinv:def}
\end{align}
for all $(x,y,t) \in \HH \times [0,\infty)$. 
Here and throughout the text, for any function $w \colon \HH \to \RR$, we denote
\begin{align}
\dyinv w(x,y) = \int_0^y w(x,z) dz.
\label{EQ07}
\end{align}
The pressure gradient $\partial_x P$ appearing on the right side of \eqref{eq:P:1} is given by \eqref{eq:P:3} and \eqref{eq:P:6} as the trace at $y=0$ of the underlying Euler pressure gradient.

From the mathematical point of view, the formal derivation of the Prandtl equations raises two intimately connected fundamental questions: 
\begin{itemize}
\item In what sense are the Prandtl equations well-posed (at least locally in time)?
\item In which space can we rigorously justify the Prandtl asymptotics?
\end{itemize}
In this paper we address the first one. The well-posedness of the Prandtl equations has been established so far only in three particular settings: either for initial data that is monotone in the $y$ variable, or for data that is real-analytic in the $x$ variable, or for initial data that changes monotonicity (in a non-degenerate way) in the $y$ variable and is of Gevrey-class in the $x$ variable. The results in this paper give a fourth regime in which the Prandtl boundary layer equations are well-posed (cf.~Theorems~\ref{thm:Prandtl:particular} and~\ref{thm:Prandtl:general} below).

\subsection*{Summary of previous results}
In the two-dimensional case, if one assumes the initial data $u$ is monotonic (and the matching Euler flow $U$ has the correct sign), using the Crocco transform it was shown in~\cite{Oleinik66}  that the equations have a unique local in time solution (cf.~also \cite{OleinikSamokhin99}). A recent, energy-based proof of this result was obtained in~\cite{MasmoudiWong12a} by appealing to a special nonlinear cancellation in the equations (cf.~also \cite{Grenier00b,AlexandreWangXuYang12}). In this {\em monotone setting}, if the pressure gradient is favorable $\partial_x P \leq 0$ the local solution can be extended globally in time~\cite{XinZhang04}. A special finite time blowup solution of the Prandtl equations was constructed in~\cite{EEngquist97} when $U=P=0$, although the corresponding inviscid problem is well-posed in a weak sense~\cite{HongHunter03}.

The second setting where the Prandtl equations are locally well-posed is the {\em analytic setting}. Using an Abstract Cauchy-Kowalewski theorem~\cite{Asano88}, it was shown in \cite{SammartinoCaflisch98a,SammartinoCaflisch98b} that if the initial data for the Prandtl equations are real analytic with respect to $x$ and $y$ (and so is the underlying Euler flow $U$), then the equations are locally well-posed in this class. The requirement of analyticity in the normal variable was removed in~\cite{CannoneLombardoSammartino01} (cf.~also \cite{CaflischSammartino00, GarganoSammartinoSciacca09}).  Recently, in~\cite{KukavicaVicol13a}, an energy-based proof of the local well-posedness result was given, assuming the initial data is real-analytic with respect to $x$ only. In addition, the result allows $u-U$ to decay at an algebraic rate as $y \to \infty$. In the three-dimensional case, the Prandtl equations are only known to be well-posed in this analytic setting.

Removing the condition of real-analyticity in the normal variable $y$ is relevant in the context of the finite time blowup for the Prandtl equations considered in \cite{EEngquist97}. The initial datum $u_0(x,y)$ shown there to yield finite time singularities has compact support in the $y$ variables and hence cannot be real-analytic. An example is given by the initial data $u_0(x,y) = - x \exp(-x^2) a_0(y)$, where $a_0(y) = f(Ry)$, $f$ is a positive, compact support bump function, and $ 4 R < \|f\|_{L^3}^{3/2} \|f'\|_{L^2}^{-1}$. Therefore, for this initial datum we have both local existence (cf.~\cite{CannoneLombardoSammartino01,KukavicaVicol13a})  and finite time blowup (cf.~\cite{EEngquist97}) of \eqref{eq:P:1}--\eqref{eq:P:6} with $U=P=0$.

At this stage we point out that in the Sobolev (and even $C^\infty$) category the equations have recently been shown to be ill-posed in the sense of Hadamard~\cite{GerardVaretDormy10} due to high-frequency instabilities in the equations linearized about certain non-monotonic shear flows (cf.~also \cite{CowleyHockingTutty85,GuoNguyen10,GerardVaretNguyen12}). The instabilities exhibited in \cite{GerardVaretDormy10,GerardVaretNguyen12} do not however preclude the well-posedness of the system in Gevrey spaces with index between $2$ and $1$ (the Gevrey-class $1$ is the class of real-analytic functions). In this direction a very recent result~\cite{GerardVaretMasmoudi13} shows that the equations are locally well-posed if the initial data lies in the Gevrey class $7/4$ and changes monotonicity in a non-degenerate way across the graph of a function of $x$. This result exhibits a nice interplay between the Gevrey class/analytic setting and the nonlinear cancellation available in the monotone setting.

\subsection*{Main results}
In this paper we address the following question for the two-dimensional Prandtl system.
Assume that on one part of the domain the initial data is given by a profile that is monotone increasing with respect to the normal variable and on another part of the domain the initial data is monotone decreasing (for example consider initial vorticity $\omega_0(x,y) = \sin(x) (1+y^2)^{-1}$, defined on the domain $[-\pi/2,\pi/2]\times \RR_+$).
{\em What is a sufficient condition to impose on the complement of these two regions, in order to ensure that the equations are locally well-posed?} We prove that such a sufficient condition is given by assuming the data is uniformly real-analytic with respect to the tangential variable in this complementary region (cf.~Theorems~\ref{thm:Prandtl:particular} and~\ref{thm:Prandtl:general} below).

The difficulties in establishing this result, as well as the analogous one for the hydrostatic Euler equations, are as follows. As opposed to~\cite{GerardVaretMasmoudi13}, we need to localize in the $x$ variable (instead of $y$) and real-analytic functions of $x$ cannot have compact support. Additionally, the localization in the $x$ variable suggests that one needs to solve for the monotone region in the presence of side boundary conditions, a construction that is not amenable using the existing energy-based tools. 

Moreover, at the technical level the norms used in the monotone region
and the analytic region are not compatible.  Indeed, in order to close the estimates in the monotone region at the level of Sobolev spaces, one works with the vorticity formulation of the Prandtl system with weights that match the number  $y$-derivatives of the solution (cf.~\cite{MasmoudiWong12a}). But it appears that in the analytic region these weights require the solution to be analytic in both $x$ and $y$, which is undesirable. Lastly, glueing the real-analytic and the monotone solutions is an issue since the latter lies merely in a finite order Sobolev space.

The main ideas that allow us to overcome these difficulties are the following. First, we observe that using a suitable change of normal variables, that depends on the underlying Euler flow $U$ (cf.~\cite{KukavicaVicol13a}),
we may construct the real-analytic solution (analytic in $x$ only, with algebraic matching with $U$ at the top of the layer) in a strip $I_a \times \RR_+$ (here $I_a$ is the {interval of analyticity}), without the use of lateral boundary conditions on $\partial I_a \times \RR_+$. In particular, this ``decouples'' the analytic estimates from the Sobolev $H^s$ estimates. This is possible because we can construct the analytic solution without integrating by parts in the $x$ variable (cf.~\cite{KukavicaVicol13a}). On the other hand, constructing the monotone solution at the level of Sobolev spaces essentially uses integration by parts in $x$. We overcome this problem by extending the initial data from $I_m \times \RR_+$ (here $I_m$ is the {interval of monotonicity}) to $\RR \times \RR_+$ while preserving the monotonicity, and use the results in \cite{MasmoudiWong12a} to construct the monotone solutions on the entire half-plane. Note that this extension cannot decay as $x\to \infty$ because then the uniform monotonicity condition would be violated. In order to work with these non-decaying solutions we appeal to a {\em locally uniform spaces} (cf.~\eqref{eq:Hs:uloc}--\eqref{eq:Linfty:uloc} below). This setting also allows us to extend the results in \cite{MasmoudiWong12a} to the case when $u$ is not $x$-periodic (cf.~Theorem~\ref{thm:Oleinik:global} below).

In the last and the crucial step, we glue the monotone solution defined on $\RR \times \RR_+$ with the analytic one defined in $I_a \times \RR_+$. The idea here is to use the {\em finite speed of propagation with respect to the $x$ variable}. Indeed, initially the monotone and analytic solutions agree on $(I_m \cap I_a) \times \RR_+$, which is non-empty. Thus, using that by continuity they are in fact both monotone there and that the real-analytic solution obeys $\sup_t \|u\|_{L^\infty(I_a \times \RR_+)} \leq M <\infty$, we prove that the two solutions agree at later times on a strip that shrinks with speed $2M$ with respect to $x$. That is, if $I_a \cap I_m \supset [a,b]$, then the solutions agree on $[a+Mt,b-Mt]\times \RR_+$ for all sufficiently small $t$. The uniqueness of the solution follows in a similar fashion. Our main results for the Prandtl system and their detailed proof are given in Section~\ref{sec:P:proof} below. 

\subsection{The Hydrostatic Euler Equations} \label{sec:hydro}

The two dimensional {hydrostatic Euler equations} for the unknown velocity field $(u,v) = (u,v)(x,y,t)$ and the scalar pressure $p=p(x,t)$, read
\begin{align} 
& \dt u + u \dx u + v \dy u + \dx p = 0 \label{eq:HE:1}\\
& \dx u + \dy v  = 0 \label{eq:HE:2}\\
& \dy p = 0 \label{eq:HE:3}
\end{align}
where $t\geq 0$ and the spatial domain is the infinite strip $\DD = \{ (x,y) \in \RR^{2} : 0 < y < 1\}$. The equations \eqref{eq:HE:1}--\eqref{eq:HE:3} are supplemented with the boundary condition
\begin{align} 
v = 0 \mbox{ on } \partial \DD \label{eq:HE:4}
\end{align}
where $\partial \DD = \{ (x,y) \in \RR^{2} : y=0 \mbox{ or } y=1\}$. 
The unknown variable $u$ is called {\em diagnostic}, while $v$ and $p$ are called {\em prognostic} since they may be computed from $u$. Indeed, from \eqref{eq:HE:2} and \eqref{eq:HE:4} we see that
\begin{align} 
v(x,y) = - \dyinv \dx u(x,y). \label{eq:HE:v:def}
\end{align}
The expression \eqref{eq:HE:v:def} and the boundary condition \eqref{eq:HE:4} naturally lead to the compatibility condition 
\begin{align*} 
\int_{0}^{1} u(x,y,t) dy = \psi(t)
\end{align*}
for $x \in \RR$ and $t \geq 0$, where $\psi$ is a function of time. 
Using a change of variables, 
we may without loss of generality consider consider this function $\psi$ to be zero, i.e.,
\begin{align} 
\int_{0}^{1} u_{0}(x,y) dy = \int_{0}^{1} u(x,y,t) dy= 0. \label{eq:mean}
\end{align}
On the other hand, integrating \eqref{eq:HE:1} in $y$ from $0$ to $1$ and using \eqref{eq:mean} we conclude that up to a function of time, which without loss of generality we set equal to zero, the pressure is determined by 
\begin{align} 
p(x,t) = - \int_{0}^{1} u^{2}(x,y,t) dy. \label{eq:p}
\end{align}

\subsection*{Summary of previous results} The hydrostatic Euler equations arise in two contexts in fluid dynamics: in modeling of the ocean and the atmosphere dynamics \cite{LionsTemamWang92a,LionsTemamWang92b,Pedlosky82,Masmoudi07b,TemamTribbia03,TemamZiane04}  and in the asymptotic limit of vanishing distance between two horizontal plates for the incompressible Euler equations~\cite{Lions96}.

Concerning the local existence and uniqueness of smooth solutions to the hydrostatic Euler equations \eqref{eq:HE:1}--\eqref{eq:HE:4} in the absence of lateral boundaries there are two main types of results. If one assumes a local Rayleigh condition, i.e., that $ \dyy u \geq \sigma > 0$ uniformly in $(x,t)$, then one can construct solutions in Sobolev spaces. This analysis was initiated in \cite{Brenier99,Brenier03,Grenier99}, which lead up to the recent energy-based approach in \cite{MasmoudiWong12b}. Note that if instead $u$ has inflection points, i.e., the Rayleigh condition is violated, the equations are Lipschitz ill-posed in Sobolev spaces \cite{Grenier00,Renardy09,Renardy11}. In the absence of a uniform Rayleigh condition the only available well-posedness results were obtained in \cite{KukavicaTemamVicolZiane11}, under the assumption that the initial data is real-analytic (cf.~also \cite{IgnatovaKukavicaZiane12} for the free surface case). The techniques in these works were inspired by the earlier results on analyticity for the Euler equations~\cite{LevermoreOliver97,OliverTiti01,KukavicaVicol11a,KukavicaVicol11b}.

In the presence of lateral boundaries the problem is even more challenging, since no local set of boundary conditions give rise to a well-posed problem~\cite{OligerSundstrom78,TemamTribbia03}. In this direction progress has been made regarding the linearized equations in~\cite{RousseauTemamTribbia05,RousseauTemamTribbia08}.

We note that in the absence of the concavity/convexity assumption, the finite time blowup of particular solutions to the hydrostatic Euler equations has been established very recently in~\cite{CaoIbrahimNakanishiTiti12} and \cite{Wong12}. The initial datum $u_0(x,y) = 1 + (1/3 - y^2) \sin(x)$ satisfies the blowup conditions in \cite{Wong12}, and is clearly real-analytic in both the $x$ and $y$ variables.  Therefore, for this initial datum we have both local existence (cf.~\cite{KukavicaTemamVicolZiane11}) and finite time blowup (cf.~\cite{CaoIbrahimNakanishiTiti12,Wong12}) of \eqref{eq:HE:1}--\eqref{eq:HE:4}.

\subsection*{Main results} In this paper we address the local existence of smooth solutions for the hydrostatic Euler equations for a larger class of initial data than previously considered in \cite{MasmoudiWong12b,KukavicaTemamVicolZiane11}. Similarly to the Prandtl case discussed in Section~\ref{sec:Prandtl} above, the question we study is the following.
Assume that the initial data satisfies $\dyy u_0 \geq \sigma > 0$ on a subset $\DD_{\rm m}^+$, and $\dyy u_0 \leq -\sigma$ on another subset $\DD_{\rm m}^-$ of $\DD$; what additional assumption on $u_0$ guarantees that one can construct a unique smooth solution, at least locally in time? We prove that if one assumes that $u_0$ is uniformly real analytic on the complement $\DDA$ of these concavity/convexity regions, then there exists $T>0$ and a solution $u$ on $[0,T]$, which lies in $H^s (\DDM)$ for some $s\geq 4$, and is real analytic on $\DDA$. That is, our initial data is allowed to be convex in certain regions of $\DD$ and concave in other regions, as long as in the transition region it is real analytic. The precise statements of our results for the hydrostatic Euler equations are given in Theorems~\ref{thm:hydro:main} and Remark~\ref{thm:hydro:general} below.

The main difficulties are similar as for the Prandtl system. Note that we cannot localize real analytic functions; thus we appeal to the interior analytic estimates, as in~\cite{KukavicaTemamVicolZiane11}, which decouple the analytic part from the $H^s$ part of the solution. In the proof of the local well-posedness in $H^s$ in \cite{MasmoudiWong12b} the key ingredient is a cancellation between the vorticity $\omega = \dy u$ and $v$ via integrating over $\DD$. Here an integration by parts in $x$ is essential, but under the current setting one cannot do this since the Rayleigh condition is not uniform over $\DD$. To overcome this difficulty, we extend the convex data to all of the strip, and use the idea in \cite{MasmoudiWong12b}, to construct a global (in $x$) convex solution. Lastly, we use the finite speed of propagation to glue the convex and analytic solutions and to prove uniqueness. Our main results for the hydrostatic Euler equations and their proofs are given in Section~\ref{sec:HE:proof} below.

\section{The main results for the Prandtl equations} \label{sec:P:proof}

\subsection{Local existence for Oleinik solutions in unbounded domains} \label{sec:P:Oleinik}

The main result of this subsection, Theorem~\ref{thm:Oleinik:global}, gives a construction of local in time solutions to the Prandtl equations under the Oleinik monotonicity condition,  using solely energy methods, even on domains that are unbounded with respect to the $x$-variable.  This result is a direct consequence of the construction of $x$-periodic $y$-monotone solutions given in \cite{MasmoudiWong12a}, and the finite speed of propagation in the $x$-variable inherent in the Prandtl equations.

We recall cf.~\cite{MasmoudiWong12a} the following function spaces.
For $s \geq 4$ an even integer, $\gamma \geq 1$, $\sigma > \gamma + 1/2$, $\delta\in(0,1)$, and a bounded interval $I \subset \RR$, we let
\begin{align}
H^{s,\gamma}(I) = \Bigl\{ \omega \colon I \times \RR_+ \to \RR:   \|\omega\|_{H^{s,\gamma}(I)} < \infty \Bigr\}
\end{align}
where 
\[
\|\omega\|_{H^{s,\gamma}(I)}^2 = \sum_{|\alpha| \leq s} \| (1+y)^{\gamma + \alpha_2} \dx^{\alpha_1} \dy^{\alpha_2} \omega\|_{L^2(I \times \RR_+)}^2.
\]
We also recall the space of $y$-monotone functions
\begin{align}
H^{s,\gamma}_{\sigma,\delta} (I)
&= \Bigl\{ \omega \in H^{s,\gamma}(I) : (1+ y)^\sigma \omega(x,y) \geq \delta \mbox{ for all } (x,y) \in I \times \RR_+, \notag\\
&\qquad \quad \sum_{| \alpha |\leq 2} | (1+y)^{\sigma+\alpha_2} \dx^{\alpha_1} \dy^{\alpha_2} \omega(x,y) | \leq \delta^{-2} \mbox{ for all } (x,y) \in I \times \RR_+ \Bigr\}.
\label{eq:weighted:Hs:def}
\end{align}
In order to simplify notations, we write
$H^{s,\gamma}(I)$ 
rather than
$H^{s,\gamma}(I \times \RR_+)$, 
i.e., the range of the $y$-variable is always assumed to be $\RR_+$.

For unbounded domains, e.g.~$I = \RR$, we need to deal with the fact that the condition 
\[
\omega(x,y) \geq \delta (1+y)^{-\sigma},
\] 
satisfied by functions in $H^{s,\gamma}_{\sigma,\delta}$, prevents the vorticity from decaying to zero when $|x| \to \infty$, and thus we cannot work in $L^2_x$-type spaces such as $H^{s,\gamma}(I)$. 

For this purpose we introduce the {\em uniformly local $H^{s,\gamma}$} space, as follows. Assume that there exists a finite length open interval $\bar I \subset \RR$, and a countable set of real numbers $\{a_j\}_{j \in{\mathbb N}}$ such that 
\[
I = \bigcup_{j \in{\mathbb N}} I_j, \qquad \mbox{where} \qquad I_j = a_j + \bar I.
\]
For instance if $I = (0,\infty)$ we may take $\bar I = (0,2)$ and $\{ a_j \}_{j\in{\mathbb N}} = \ZZ$. We then define
\begin{align}
H^{s,\gamma}_{\rm uloc}(I) 
= \Bigl\{ \omega \colon I \times \RR_+ \to \RR : \|\omega\|_{H^{s,\gamma}_{\rm uloc}(I)} = \sup_{j\in{\mathbb N}} \|\omega\|_{H^{s,\gamma}(I_j)} < \infty \Bigr\}
\label{eq:Hs:uloc}
\end{align}
and similarly
\begin{align}
H^{s,\gamma}_{\sigma,\delta,{\rm uloc}} (I)
&= \Bigl\{ \omega \in H^{s,\gamma}_{\rm uloc}(I) : (1+ y)^\sigma \omega(x,y) \geq \delta \mbox{ for all } (x,y) \in I \times \RR_+, \notag\\
&\qquad \quad \sum_{| \alpha |\leq 2} | (1+y)^{\sigma + \alpha_2} \dx^{\alpha_1} \dy^{\alpha_2} \omega(x,y) | \leq \delta^{-2} \mbox{ for all } (x,y) \in I \times \RR_+ \Bigr\}.
\label{eq:Linfty:uloc}
\end{align}
Note that the additional conditions for a function in $H^{s,\gamma}_{\rm uloc}$ to lie in $H^{s,\gamma}_{\sigma,\delta}$, are $L^\infty_{x,y}$-based conditions (rather than $L^2_{x,y}$-based) and thus we do not need to explicitly take an additional supremum over $j$. 

The main result of this subsection is as follows.

\begin{theorem}
\label{thm:Oleinik:global}
Let $I \subset \RR$ be an open interval, $s\geq 4$ be an even integer, $\gamma \geq 1$, $\sigma > \gamma+1/2$, and $\delta \in (0,1/4)$. Assume that the initial velocity obeys $u_0 - U_0 \in H^{s,\gamma-1}_{\rm uloc}(I)$, the initial vorticity satisfies $\omega_0 \in H^{s,\gamma}_{\sigma,2\delta,{\rm uloc}}(I)$, and that the outer Euler flow $U$ is sufficiently smooth (for example $\|U\|_{W^{s+9,\infty}_{t,x}} = \sup_t \sum_{0 \leq 2l \leq s+9} \| \partial_t^l U\|_{W^{s-2l+9,\infty}(I)} < \infty$ will suffice).
Then there exists $T>0$ and a 
smooth solution $u$ of \eqref{eq:P:1}--\eqref{eq:P:6} such that the velocity obeys $u-U \in L^\infty([0,T];H^{s,\gamma-1}_{\rm uloc} (I)) \cap C_w([0,T];H^s_{\rm uloc}(I))$ and the vorticity obeys
$\omega \in L^\infty([0,T];H^{s,\gamma}_{\sigma,\delta,{\rm uloc}}(I)) \cap C_w([0,T];H^s_{\rm uloc}(I))$.
When $I=\RR$ or $I={\mathbb T}$, the solution constructed above is the unique solution in this regularity class. When $ \partial I \neq \emptyset$, e.g.~if $I = (a,b)$, there exists a positive $M<\infty$ such that $u(t)$ is the unique solution with $u(t) - U(t) \in H^{s,\gamma-1}(I_t)$ and $\omega(t) \in H^{s,\gamma}_{\sigma,\delta,{\rm uloc}}(I_t)$, where 
$I_t=\{x\in{\mathbb R}:(x-M t,x+M t)\subseteq I\}$.
\end{theorem}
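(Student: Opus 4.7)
My plan is to deduce the theorem from the $x$-periodic, $y$-monotone local existence result of \cite{MasmoudiWong12a} together with a finite speed of propagation estimate in the $x$-variable for the Prandtl system. I will proceed in three steps: (a) prove finite speed of propagation and the corresponding uniqueness statements; (b) construct the solution on $I=\RR$ by periodic exhaustion, with uniform-in-period control obtained via (a); and (c) reduce the bounded-interval case $I=(a,b)$ to the case on $\RR$ by extending the initial data.

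Step (a) is what I view as the main obstacle. Given two solutions $u_1,u_2$ in the stated regularity class that agree on $I$ at $t=0$, fix $M$ larger than an $L^\infty$ bound on $u_1,u_2$ on $[0,T]$, set $w=u_1-u_2$, and introduce a smooth cutoff $\chi_t(x)=\phi(d(x,\partial I)-Mt)$ with $\phi\geq 0$ nonincreasing and vanishing on $(-\infty,0]$, so that $\chi_t$ is supported in $I_t$ and $\dt\chi_t+M|\dx\chi_t|\leq 0$. Running the Masmoudi-Wong weighted energy estimate for the renormalized difference $\dy w/\omega_1$, which sidesteps the apparent derivative loss in $(v_1-v_2)\dy u_2=-\dyinv(\dx w)\dy u_2$ via the Oleinik-Masmoudi-Wong cancellation, against the cutoff $\chi_t$, the transport contribution $u_j\dx w\cdot\chi_t$ integrates by parts to boundary terms of favorable sign $(|u_j|-M)|\dx\chi_t|\leq 0$ that absorb the $\dt\chi_t$ contribution. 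A Gronwall estimate then forces $w\equiv 0$ on $\{\chi_t>0\}=I_t\times\RR_+$, which yields the localized uniqueness on $I_t$ in the bounded case and the global uniqueness in the $I\in\{\RR,\mathbb{T}\}$ case (where $I_t=I$).

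For step (b), I would pick $L_n\to\infty$ and construct $2L_n$-periodic data $\omega_0^{(n)}$ agreeing with $\omega_0$ on $[-L_n,L_n]$ and interpolating on a fixed-width buffer with a standard admissible shear $\omega_\ast(y)=\delta(1+y)^{-\sigma}$; the slack between the $2\delta$ in the hypothesis on $\omega_0$ and the $\delta$ in the desired output absorbs the gluing. Masmoudi-Wong yields smooth periodic solutions $u^{(n)}$ on some $[0,T_n]$. On any compact strip $J\subset\RR$, step (a) applied to two periodic approximations $u^{(n)},u^{(m)}$ (both of which lie in the uloc class) shows that they coincide on $J$ once $L_n,L_m$ are large enough, which combined with the periodic a priori estimates yields uniform-in-$n$ control in $H^{s,\gamma-1}(J)$ on a common existence time $[0,T]$ with $T>0$. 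A diagonal extraction, weak-$\ast$ compactness in $H^s_{\rm uloc}$, and Aubin-Lions on bounded sets then give a limit $u$ solving \eqref{eq:P:1}--\eqref{eq:P:6}, with the pointwise monotonicity conditions defining $H^{s,\gamma}_{\sigma,\delta,{\rm uloc}}$ surviving by uniform $C^0$ convergence on compact sets. Step (c) is immediate: extend $u_0,\omega_0$ from $(a,b)$ to $\RR$ by gluing with $\omega_\ast$ on a buffer outside $(a,b)$ while preserving the uniformly local norm and the monotonicity bounds, apply (b), and invoke (a) to identify the resulting solution on $I_t$.
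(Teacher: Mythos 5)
Your overall strategy---reduce to the $x$-periodic Masmoudi--Wong result and glue via finite speed of propagation in $x$---is the same as the paper's, and your step (a), the localized uniqueness and finite speed of propagation via the renormalized difference $g = \tilde\omega - \tilde u\, \partial_y\log\omega$ against a moving weight, is essentially identical (the paper integrates over the shrinking interval $I_t$ and picks up boundary terms from the moving endpoints; your smooth cutoff $\chi_t$ with $\partial_t\chi_t + M|\partial_x\chi_t|\le 0$ accomplishes the same thing).

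However, step (b) as written has a genuine gap. You take $2L_n$-periodic extensions $\omega_0^{(n)}$ with $L_n\to\infty$ and invoke Masmoudi--Wong on $\mathbb{T}_{2L_n}$. But the time of existence in that theorem depends on the full $H^{s,\gamma}(\mathbb{T}_{2L_n})$ norms, and since $\omega_0\in H^{s,\gamma}_{\sigma,2\delta,\mathrm{uloc}}$ is pointwise bounded below (so cannot decay as $|x|\to\infty$), one has $\|\omega_0^{(n)}\|_{H^{s,\gamma}(\mathbb{T}_{2L_n})}\sim\sqrt{L_n}\,\|\omega_0\|_{H^{s,\gamma}_{\mathrm{uloc}}}$, and similarly for $\|u_0^{(n)}-U_0^{(n)}\|$. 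Consequently the periodic times of existence $T_n$ that Masmoudi--Wong produces are not obviously bounded below uniformly in $n$---they could shrink to zero---and your claim of ``a common existence time $[0,T]$ with $T>0$'' does not follow from the ``periodic a priori estimates'' alone. The agreement of $u^{(n)}$ and $u^{(m)}$ on compact strips, which you get from step (a), holds only up to time $\min(T_n,T_m)$ and does not by itself provide a continuation argument if $T_n\to 0$; you would need a local-in-$x$ blow-up criterion, which is not given by the periodic theory you cite.

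The paper avoids this by the opposite scaling: it fixes a short period (the data is extended from each unit-length piece $I_j=(j,j+2)$ to a $4$-periodic function on $\tilde I_j=(j-1,j+3)$), so that every extension $u_0^{(j)},\omega_0^{(j)}$ has norm bounded by a constant times the uloc norm of the data, \emph{uniformly in $j$}. Masmoudi--Wong then directly gives a uniform lower bound on all the $T_j$. Moreover, no limit $n\to\infty$ is needed: since adjacent periodic solutions $u^{(j)}, u^{(j+1)}$ agree on the shrinking overlap $(j+1+Mt, j+2-Mt)$ by the finite speed of propagation, one simply defines $u(t,x,y)=u^{(j)}(t,x,y)$ for $x\in[j+1/2,j+3/2)$, and this patched function solves the Prandtl system because the equations are local in $x$. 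Your limit-taking with Aubin--Lions is an extra complication that is both unnecessary and, as it stands, unjustified because of the uniform-time issue. If you replace your growing-period exhaustion by a fixed-period shifting construction, the rest of your argument goes through.
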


In the case $I=(a,b)$ we have $I_t=(a+Mt, b-Mt)$. When $s=4$ we need to also assume  that $\delta$ is chosen small enough so that $\| \omega_0 \|_{H_g^{s,\gamma}} \leq C \delta^{-1}$ (cf.~\cite[Equation (3.1)]{MasmoudiWong12a}). Here $C_w([0,T];X)$ stands for continuity with values in $X$, when the space $X$ is endowed with its weak topology.

For simplicity of the presentation we only give the proof in the case $I = \RR$, with $I_j = (j,j+2)$, and $j \in \ZZ$. The extension to the general case described in Theorem~\ref{thm:Oleinik:global} requires no additional arguments.

\begin{proof}[Proof of Theorem~\ref{thm:Oleinik:global}]
In order to appeal to the results in \cite{MasmoudiWong12a}, we need to work with solutions that are periodic in the $x$-variable. For this purpose fix an arbitrary $j \in \ZZ$. We extend the initial data $u_0, \omega_0$ and the outer Euler flow $U$ from the domain $I_j \times \RR_+ = (j,j+2)\times \RR_+$ to $x$-periodic functions $u_0^{(j)}, \omega_0^{(j)}, U^{(j)}$ defined on $\tilde I_j \times \RR_+ = (j-1,j+3) \times \RR_+$. The periodic extension is made in such a way that $u_0^{(j)}-U^{(j)} \in H^{s,\gamma-1}(\tilde I_j)$, with $\| u_0^{(j)}-U_0^{(j)} \|_{H^{s,\gamma-1}(\tilde I_j)} \leq C \| u_0 - U_0\|_{H^{s,\gamma-1}_{\rm uloc}(I)} = M_0$, and that $\omega_0^{(j)} \in H^{s,\gamma}_{\sigma,3\delta/2}(\tilde I_j)$, with $\| \omega_0^{(j)} \|_{H^{s,\gamma}_{\sigma,3\delta/2}(\tilde I_j)} \leq C \|\omega_0\|_{H^{s,\gamma}_{\sigma,2\delta,{\rm uloc}}} = M_1$. Since we have an $x$-periodic initial data that is monotone in $y$ and decays sufficiently fast as $y\to \infty$, we may apply the result in \cite[Theorem~2.2]{MasmoudiWong12a} {\em directly} in order to obtain a unique local in time solution with $u^{(j)} - U^{(j)} \in L^\infty([0,T_j];H^{s,\gamma-1}(\tilde I_j))$ and $\omega^{(j)} \in L^\infty([0,T_j]; H^{s,\gamma}_{\sigma,\delta}(\tilde I_j) )$, for some $T_j = T_j(M_0,M_1,s,\gamma,\sigma,\delta,\|U\|_{W^{s+9,\infty}_{t,x}})> 0$. At this stage we note that in view of the dependency of $T_j$ and the uniform local spaces considered, we may find $T>0$ such that $T\leq T_j$ for all $j\in \ZZ$, i.e., {\em the time of existence can be taken independent of $j$}.

We have so far constructed a countable family of solutions to the Prandtl equations, each of whom have a common life-span, but so that each solution $u^{(j)}$ is $\tilde I_j$-periodic in the $x$ variable. Using the transport structure of \eqref{eq:P:1} with respect to the $x$-variable we now proceed to ``glue'' the above constructed solutions. 

Fix some $j \in \ZZ$. At time $0$, by construction, the solutions $u^{(j)}$ and $u^{(j+1)}$ are identical on the interval $(j+1,j+2)$. 
By construction, we have that
\[
\sup_{j \in \ZZ} \sup_{t \in [0,T]} \left( \| u^{(j)} \|_{L^\infty(0,T;H^{s-1,\gamma}(\tilde I_j))} +\| \omega^{(j)} \|_{L^\infty(0,T;H^{s,\gamma}_{\sigma,\delta}(\tilde I_j))} \right) < \infty.
\]
Let
\[
M = \sup_{j \in \ZZ} \sup_{t \in [0,T]} \| u^{(j)}(t) \|_{L^\infty (\tilde I_j \times \RR_+)} < \infty
\]
be an upper bound on the speed of propagation in the $x$ variable. Note that since $s\geq 4$, $M$ may be bounded in terms of the $H^{s,\gamma-1}_{\rm uloc}$ energy $M_0$ and the $L^\infty_{t,x}$ norm of $U$.  It thus follows from the finite speed of propagation in the $x$ variable (transport structure) that for each 
\[
0 \leq t < T_* = \min\{ T, 2/M \}
\]
we have
\begin{align}
u^{(j)}(t,x,y) \equiv u^{(j+1)}(t,x,y), \qquad (x,y) \in (j+1 + Mt, j+2 - Mt) \times \RR_+.
\label{eq:monotone:glueing}
\end{align}
The proof of \eqref{eq:monotone:glueing} uses the ideas of the uniqueness proof in \cite[Section 6.2]{MasmoudiWong12a}. We  establish that the function 
\[
g = \omega^{(j+1)} - \omega^{(j)} - ( u^{(j+1)} - u^{(j)}) \partial_y \log( \omega^{(j)})
\]
obeys
\begin{align}
\| g(t) \|_{L^2( (j+1 + Mt, j+2 - Mt) \times \RR_+ )} \leq \| g(0) \|_{ (j+1,j+2)\times \RR_+} \exp (Ct)
\label{eq:monotone:glueing:estimate}
\end{align}
for $t \in [0,T_*)$ with some $C>0$. This implies that $g \equiv 0$ on $[0,T_*)$, and in view of the boundary conditions for the Prandtl equations we furthermore obtain \eqref{eq:monotone:glueing}. The detailed proof of the estimate \eqref{eq:monotone:glueing:estimate} is given in Section~\ref{sec:P:glueing} below (cf.~\eqref{eq:g:Gronwall}), for the case when one solution is analytic and the other one is monotone. The proof works however with no changes for the case of two monotone solutions and we omit the details to avoid redundancy.

In order to conclude the proof of the existence part of Theorem~\ref{thm:Oleinik:global}, we define a global (in $x$) solution $u(t,x,y)$ of the Prandtl equation on $[0,T_*) \times \RR \times \RR_+$ by
\begin{align}
u(t,x,y) = u^{(j)}(t,x,y) \qquad \mbox{whenever} \qquad x \in [j+1/2, j+3/2).
\end{align}
The uniform in $j$ bounds are inherited from the bounds obtained on the $u^{(j)}$'s individually. The proof of uniqueness follows by appealing again to the finite speed of propagation in the $x$ variable.
\end{proof}

\subsection{Local existence with multiple monotonicity regions} \label{sec:P:multiple:monotonicity}
The main result of this subsection, Theorem~\ref{thm:Prandtl:general} below, gives the local existence of the Prandtl equations in the presence of multiple monotonicity regions. 

For simplicity of the presentation, we only give the proof of a special case of our main result, Theorem~\ref{thm:Prandtl:particular} below, when the monotonicity regions cover the half plane minus the line $x=0$, around which the function is assumed to be analytic. Thus we treat the initial date $u_0$ which satisfy
  \begin{itemize}
   \item $\dy u_0 (x,y) < 0$ for $x < 0$ 
   \item $\dy u_0 (x,y) > 0$ for $x > 0$
   \item $u_0(x,y)$ is real analytic with respect to $x$ around $x=0$
  \end{itemize}
combined with suitable smoothness, decay, and compatibility conditions on the underlying tangential component of the Euler flow trace $U_0(x)$.

The precise setup is as follows.
Let $2 \tau_0 > 0 $ be the analyticity radius of $u_0(x,y)$ at $x=0$. The radius is assumed to be uniform with respect to $y$. Then the power series in $x$ for $u_0(x,y)$ converges with radius $\tau_0 >0$ at $(x,y) \in [- \tau_0, \tau_0] \times \RR_+$. Denote an interval of analyticity by 
\[
I_a = [-\tau_0,\tau_0].
\] 
 Quantitatively, we assume that $\bu_0$, as defined in \eqref{eq:bu:def} below, satisfies
\begin{align}
\| \bu_0 \|_{Y_{\tau_0}(I_a)} < \infty \label{eq:analytic:IC}
\end{align}
where the norm in $Y_{\tau_0}$ is defined in \eqref{eq:Y:tau:def} below. Here and throughout the paper we are implicitly using the fact that if a function is analytic on $(a,b)$ with radius $2\tau$, then it is analytic on $(a-\tau,b+\tau)$ with radius $\tau$.

In terms of the monotonicity assumptions on $u_0(x,y)$ for $x>0$ and for $x<0$, we assume that there exist $s\geq 4$, an even integer, $\gamma \geq 1, \sigma > \gamma + 1/2$, and $\delta \in (0,1/2)$ such that the initial velocity $u_0$ and the initial vorticity $\omega_0 = \dy u_0$ obey
  \begin{align}
   &u_0 - U_0 \in H^{s,\gamma -1}_{\rm uloc} (I_m^+), \quad u_0 - U_0 \in H^{s,\gamma - 1}_{\rm uloc}(I_m^-), \label{eq:monotone:IC:1}
  \end{align}
and
  \begin{align}
    &\omega_0 \in H^{s,\gamma}_{\sigma, 2 \delta,{\rm uloc}}(I_m^+), \quad -\omega_0 \in H^{s,\gamma}_{\sigma, 2 \delta,{\rm uloc}}(I_m^-) \label{eq:monotone:IC:2}
  \end{align}
with suitable assumptions on $U$ and on $\omega_0$ when $s=4$. Here we denote the {\em intervals of monotonicity} by
\begin{align*}
I_m^+ = [\tau_0/2,\infty) \quad \mbox{and} \quad I_m^- = (-\infty,-\tau_0/2].
\end{align*}
The spaces
$H^{s,\gamma -1}_{\rm uloc}(I)$ and
$H^{s,\gamma}_{\sigma, 4 \delta,{\rm uloc}}(I)$ are as defined in \eqref{eq:Hs:uloc} and \eqref{eq:Linfty:uloc} above.

Note that $I_a$ and $I_m^\pm$ have a non-empty overlap. With these notations, the statement whose proof we present in this section, is the following.

\begin{theorem}\label{thm:Prandtl:particular}
Let $s\geq 4$ be an even integer, $\tau_0>0$, $\gamma \geq 1$, $\sigma > \gamma+1/2$, and $\delta \in (0,1/4)$.
Assume that the initial velocity $u_0$ and vorticity $\omega_0$ obey \eqref{eq:analytic:IC}--\eqref{eq:monotone:IC:2} above, and that the underlying Euler flow $U$ is sufficiently smooth. If $s=4$, assume that $\delta$ is sufficiently small so that $\|\omega_0\|_{H_g^{s,\gamma}} \leq C \delta^{-1}$. Then there exists $T>0$ and a smooth solution $u$ of \eqref{eq:P:1}--\eqref{eq:P:6} on $[0,T]$ such that  $\bar u$, as defined in \eqref{eq:bu:def} below, belongs to $L^\infty([0,T];X_{{\eps_U \tau_0}}(I_a)) \cap L^2((0,T); Y_{\eps_U \tau_0}(I_a)) \cap L^2((0,T); Z_{\eps_U \tau_0}(I_a))$ on the analytic region, for some $\eps_U>0$, and that on the monotone regions we have 
$u-U \in L^\infty([0,T];H^{s,\gamma-1}_{\rm uloc}(I_m^{\pm})) \cap C_w([0,T];H^s_{\rm uloc}(I_m^{\pm}))$ 
and 
$\pm \omega \in L^\infty(H^{s,\gamma}_{\sigma,\delta,{\rm uloc}}(I_m^{\pm})) \cap C_w([0,T];H^s_{\rm uloc}(I_m^{\pm})) $. The solution $u$ is unique in this class.
\end{theorem}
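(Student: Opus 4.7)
My plan is to realize the solution $u$ as the glueing of three separate Prandtl solutions: two monotone pieces $u_m^{\pm}$, each defined on all of $\RR\times\RR_+$, together with one analytic piece $u_a$ defined on the strip $I_a\times\RR_+$, stitched together by exploiting the finite speed of propagation in the $x$-variable. To build $u_m^+$ I would first extend $u_0|_{I_m^+}$ to an initial datum on $\RR\times\RR_+$ that preserves the uniform $y$-monotonicity with $\omega_0^+\geq \tfrac{3}{2}\delta(1+y)^{-\sigma}$ and belongs to $H^{s,\gamma-1}_{\rm uloc}(\RR)$; this is possible because the uniformly local norms do not require spatial decay as $|x|\to\infty$, so one may continue $u_0$ by a constant-in-$x$ monotone tail on $x<\tau_0/2$ past a smooth cutoff. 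Theorem~\ref{thm:Oleinik:global} then yields a unique $u_m^+$ on $[0,T_1]\times\RR\times\RR_+$ with $\omega_m^+\in L^\infty(H^{s,\gamma}_{\sigma,\delta,{\rm uloc}}(\RR))$, and the mirror construction on $I_m^-$ produces $u_m^-$. The analytic piece $u_a$ will be constructed on $I_a\times\RR_+$ directly from the analytic initial data, invoking the interior analytic theory of~\cite{KukavicaVicol13a} after the $U$-adapted change of normal variable; since that construction closes without any integration by parts in $x$, no lateral condition on $\partial I_a\times\RR_+$ is required, and the analyticity radius $\tau(t)$ decreases at a controlled linear rate, giving a common time $T_2$.

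For the glueing, at $t=0$ the pairs $(u_a,u_m^+)$ and $(u_a,u_m^-)$ agree on the overlaps $[\tau_0/2,\tau_0]\times\RR_+$ and $[-\tau_0,-\tau_0/2]\times\RR_+$ respectively, where $\pm\omega_m^{\pm}\geq \delta(1+y)^{-\sigma}>0$. Set
\[
M = \sup_{0\leq t\leq \min(T_1,T_2)}\bigl(\|u_a\|_{L^\infty}+\|u_m^+\|_{L^\infty}+\|u_m^-\|_{L^\infty}+\|U\|_{L^\infty}\bigr),
\]
which is finite by Sobolev embedding in the uniformly local spaces and the analytic energy bounds. I would then introduce the Masmoudi--Wong auxiliary variable
\[
g = \omega_a - \omega_m^+ - (u_a - u_m^+)\,\partial_y\log \omega_m^+,
\]
well defined on the overlap since the \emph{monotone} solution supplies the positive denominator. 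Following the template of~\cite[Section~6.2]{MasmoudiWong12a}, but localized to the moving strip $(\tau_0/2+Mt,\tau_0-Mt)\times\RR_+$, the boundary commutators generated by the $x$-cutoff will be absorbed by the Gronwall left-hand side precisely because the cutoff travels faster than the transport speed. The resulting estimate is exactly~\eqref{eq:monotone:glueing:estimate}, and since $g(0)\equiv 0$ on the overlap it forces $g\equiv 0$, hence (using no-slip at $y=0$) $u_a\equiv u_m^+$ on the moving strip for $t<T_*:=\min(T_1,T_2,\tau_0/(4M))$. The symmetric argument on the left gives $u_a\equiv u_m^-$ on the analogous strip, so the pointwise definition $u:=u_m^-$ on $x\leq -\tau_0/2$, $u:=u_a$ on $|x|\leq\tau_0/2$, $u:=u_m^+$ on $x\geq \tau_0/2$ is consistent, and the regularity claims are inherited piece by piece. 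Uniqueness in the stated class follows identically: monotone uniqueness from Theorem~\ref{thm:Oleinik:global} and analytic uniqueness from~\cite{KukavicaVicol13a} force agreement on each piece, and the same Gronwall identity for $g$ propagates equality across the overlaps.

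The hardest point will be the rigorous derivation of~\eqref{eq:monotone:glueing:estimate} when one of the two solutions is merely analytic rather than monotone. The Masmoudi--Wong cancellation crucially relies on nonvanishing vorticity in the denominator of $\partial_y\log\omega$, and the key observation enabling the mixed estimate is that one is free to form $g$ using $\omega_m^+$ alone (never $\omega_a$), so the denominator stays uniformly controlled on the overlap even though $\omega_a$ is allowed to vanish there. A secondary subtlety is the $x$-localization: cutting off at the moving endpoints generates boundary integrals on $\{x=\tau_0/2+Mt\}$ and $\{x=\tau_0-Mt\}$ whose signs must line up favorably, which happens precisely because the cutoff moves at speed $M$, the pointwise bound on the Prandtl transport speed. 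A smaller but nontrivial technical issue is producing the monotone extension of $u_0|_{I_m^+}$ while simultaneously preserving the uniformly local weighted norms and the quantitative lower bound $\tfrac{3}{2}\delta(1+y)^{-\sigma}$ required as input for Theorem~\ref{thm:Oleinik:global}; this will be handled by a careful cutoff-and-continuation construction adapted to the monotone setting.
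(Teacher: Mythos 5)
Your overall strategy is the same as the paper's: build the two monotone pieces on all of $\RR\times\RR_+$ by extending the data while preserving the uniformly local monotone bounds, build the analytic piece on $I_a\times\RR_+$ without lateral boundary conditions (no $x$-integration by parts), and glue them via the Masmoudi--Wong variable $g$ localized to a strip that shrinks at the transport speed. Your key observation that $g$ must be built from $\omega_m^+$ (never $\omega_a$) so that the logarithm denominator stays away from zero is exactly the point the paper exploits.

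There is, however, a genuine gap that you leave unaddressed. You define the speed $M$ through $\|u_a\|_{L^\infty}$ and assert it is finite ``by Sobolev embedding ... and the analytic energy bounds,'' but the a priori bounds coming from the $X_\tau,Y_\tau,Z_\tau$ framework only give $\bar u\in L^\infty_t X_\tau \cap L^2_t(Y_\tau\cap Z_\tau)$. The $X_\tau$ norm is $L^2$-based in $y$ with no $\partial_{\by}$ control, and $Z_\tau$ (which does control one $\partial_{\by}$) is available only in $L^2$ in time. Neither gives a uniform-in-time $L^\infty_{x,y}$ bound on $u_a$, so $M$ is not finite on the basis of those bounds alone. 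Moreover, the $g$-equation and the $J_1$-type term require two $y$-derivatives of the analytic vorticity to be integrable, which again is not contained in the analytic a priori estimate. The paper closes this gap with a separate parabolic smoothing lemma (Lemma~\ref{lem:vort:dy}): multiplying the $\bar u$-equation by $-\partial_{\by\by}\bar u\,\rho^2(\by)$ and integrating over $I_a\times\RR_+$ yields $\rho\,\omega\in L^\infty_t H^1_x L^2_y$ and $\rho\,\partial_y\omega\in L^2_t H^1_x L^2_y$, from which $\sup_{t,x,y}|u_a|$ and the terms in the $g$-estimate are controlled. Without such a lemma your construction of $M$ and the rigorous execution of the $g$-Gronwall argument do not follow, so you should add this step explicitly.
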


The spaces $X_\tau, Y_\tau$, and $Z_\tau$ are defined in \eqref{eq:X:tau:def}--\eqref{eq:Z:tau:def} below. From the proof of Theorem~\ref{thm:Prandtl:particular}, given below, it is clear that the following more general statement holds.

\begin{theorem} \label{thm:Prandtl:general}
 Let $s\geq 4$ be an even integer, $\tau_0>0$, $\gamma \geq 1$, $\sigma > \gamma+1/2$, and $\delta \in (0,1/4)$. Assume that there exist open intervals $\{ I_{i}^{+} \}_{i=1}^{n_+}$ and $\{I_{k}^-\}_{k=1}^{n_-} \subset \RR$ such that the initial velocity $u_0(x,y)$ obeys 
$u_0 - U_0 \in H^{s,\gamma-1}_{\rm uloc}(I_i^\pm)$, 
the initial vorticity $\omega_0(x,y)$ satisfies 
$\pm \omega_0 \in H^{s,\gamma}_{\sigma,2\delta,{\rm uloc}}(I_i^\pm)$, 
and the underlying Euler flow $U(x,t)$ is such that $\| U \|_{W_{t,x}^{s+9,\infty}(I_i^\pm)}< \infty$. If $s=4$, assume that $\delta$ is chosen small enough depending on $\omega_0$. Assume also that there exist open intervals 
$\{J_j\}_{j=1}^{m} \subset \RR$ such that the initial vorticity $\omega_0$ is uniformly real analytic in $x$ with radius at least $2\tau_0$ there, i.e., $\omega_0 \in Y_{2\tau_0}(J_j)$, and that the underlying Euler flow $U(x,t)$ is uniformly (in $x$) real analytic on $J_j$ with radius at least $2\tau_0$ and norm bounded with respect to $t$. 
Lastly, assume the intervals of monotonicity $I_i^\pm$ and of analyticity $J_j$ cover the real line, that is $(\cup_{j} J_j) \cup (\cup_i I_i^+) \cup (\cup_k I_k^-)= \RR$. 
Then there exists $T>0$ and a smooth solution $u$ of \eqref{eq:P:1}--\eqref{eq:P:6} on $[0,T]$ which is monotone with respect to $y$ in $I_i^\pm \times \RR_+$, analytic with respect to $x$ in  $J_j \times \RR_+$, and is unique in this class.
\end{theorem}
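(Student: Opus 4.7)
The plan is to deduce Theorem~\ref{thm:Prandtl:general} from exactly the same three building blocks that underlie Theorem~\ref{thm:Prandtl:particular}: the monotone existence theory in uniformly local spaces (Theorem~\ref{thm:Oleinik:global}), the analytic-in-$x$ construction on a strip $J\times \RR_+$ without lateral conditions (a variant of the argument of~\cite{KukavicaVicol13a}, using a $U$-dependent change of normal variable and the $X_\tau, Y_\tau, Z_\tau$ norms), and the finite-speed-of-propagation gluing carried out in Section~\ref{sec:P:glueing}. Because by hypothesis the family $\{J_j\}\cup\{I_i^+\}\cup\{I_k^-\}$ is finite and covers $\RR$, only finitely many pairs of intervals can overlap and the argument reduces to performing each building block once per interval and then gluing across the finitely many overlaps.

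First I would build the pieces. On each monotone interval $I_i^+$, I extend $u_0-U_0$ and $\omega_0$ from $I_i^+\times\RR_+$ to $\RR\times\RR_+$ preserving the lower bound $(1+y)^\sigma\omega_0\ge 3\delta/2$ and the uniformly local $H^{s,\gamma-1}$ and $H^{s,\gamma}_{\sigma,3\delta/2,{\rm uloc}}$ bounds (the extension cannot decay as $|x|\to\infty$ — this is why the uniformly local framework is essential), then apply Theorem~\ref{thm:Oleinik:global} to obtain a solution $u_i^{(+)}$ on $[0,T_i^{(+)}]\times\RR\times\RR_+$. An analogous construction on each $I_k^-$, after reflecting the sign of $\omega$, yields $u_k^{(-)}$. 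On each analytic interval $J_j$, using the $U$-dependent change of $y$-variable and solving a vorticity-form equation with algebraic matching at $y\to\infty$ as in~\cite{KukavicaVicol13a}, I construct an analytic-in-$x$ solution $u_j^{(a)}$ on $[0,T_j]\times J_j\times\RR_+$ with radius at least $\eps_U\tau_0$, without prescribing lateral boundary data on $\partial J_j\times\RR_+$. Taking $T$ to be the minimum of the finitely many lifespans and $M$ a uniform $L^\infty$ bound on all the constructed velocities, every piece lives up to time $T$.

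Next comes the gluing, which I expect to be the main obstacle. Any two intervals in the cover whose closures intersect admit a non-empty overlap on which the two corresponding solutions coincide at $t=0$; I need to propagate this agreement to an interval that shrinks at speed $M$ in the $x$-direction. Three types of overlaps occur. The monotone--monotone case (both solutions having $\omega$ of the same sign on the overlap) is handled exactly as in the proof of Theorem~\ref{thm:Oleinik:global}, by introducing $g=\omega^{(2)}-\omega^{(1)}-(u^{(2)}-u^{(1)})\partial_y\log\omega^{(1)}$ and establishing the Gronwall estimate \eqref{eq:monotone:glueing:estimate} on a shrinking strip. The analytic--analytic case reduces to the uniqueness statement for Cauchy--Kowalewski-type solutions in $X_\tau$, applied on the overlap. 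The delicate monotone--analytic case is precisely the one addressed in Section~\ref{sec:P:glueing}: one applies the same $g$-based computation, using the monotonicity of one solution to justify division by $\omega$ and the analyticity of the other to control the $x$-derivative terms that are not integrated by parts. Since in the setting of Theorem~\ref{thm:Prandtl:general} the overlaps $I_i^\pm\cap J_j$ and $J_j\cap J_{j'}$ are again finitely many, nothing new beyond the estimate of Section~\ref{sec:P:glueing} is needed.

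Finally I would patch the pieces together, setting $u(t,x,y)$ equal on each slab $[0,T_*)\times(\text{interval shrunk by }Mt)\times\RR_+$ to the corresponding $u_i^{(+)}, u_k^{(-)}$, or $u_j^{(a)}$; the gluing estimates guarantee the definition is unambiguous on overlaps. By construction $u$ is monotone in $y$ on each $I_i^\pm$-slab with norms inherited from $u_i^{(\pm)}$, and is analytic in $x$ on each $J_j$-slab with norms inherited from $u_j^{(a)}$. Uniqueness within the stated class follows by applying the same three pairwise gluing estimates to any two putative solutions on each region of the cover, and then using a continuation argument to propagate the agreement to the full existence time $T_*$.
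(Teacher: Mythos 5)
Your proposal is correct and takes essentially the same route as the paper: the paper itself dispatches Theorem~\ref{thm:Prandtl:general} in one sentence (``it is clear from the proof of Theorem~\ref{thm:Prandtl:particular}''), and what you have written is a faithful unpacking of that remark, assembling the three building blocks (Theorem~\ref{thm:Oleinik:global}, the analytic strip construction, and the Section~\ref{sec:P:glueing} gluing) over the finitely many overlaps of the cover.

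Two small points worth tightening. First, the phrase ``whose closures intersect'' is slightly off --- closures touching at a point give no open overlap; the right statement is that in a finite cover of $\RR$ by open intervals any two \emph{adjacent} intervals necessarily overlap on an open set of positive length, and since there are finitely many such overlaps there is a positive minimum overlap length $\ell_0$. Second, and relatedly, you need to make the constraint $M T_* < \ell_0/2$ explicit: it is not just the minimum of the piecewise lifespans that determines $T_*$, but also the requirement that after shrinking each interval by $MT_*$ at both ends the shrunk intervals still cover $\RR$ so that the patching leaves no gaps; the paper states this explicitly in Section~\ref{sec:P:uniqueness} and it is needed for existence as well as uniqueness. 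Finally, your analytic--analytic case is correct as stated (the analytic difference estimate of \cite[Section~6]{KukavicaTemamVicolZiane11} needs no lateral boundary data, hence applies on the sub-strip $J_j\cap J_{j'}$), but a cleaner alternative that avoids introducing this case at all is to pre-merge overlapping $J_j$'s into maximal analyticity intervals (the union of two intervals on which $\omega_0$ has analyticity radius $2\tau_0$ and finite $Y_{2\tau_0}$-norm is again such an interval), and similarly to merge overlapping same-sign monotonicity intervals, so that the only overlaps remaining in the cover are of monotone--analytic type and Section~\ref{sec:P:glueing} applies verbatim; this appears to be what the paper's remark after the theorem has in mind, since it only mentions overlaps between analyticity and monotonicity regions.
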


We note that due of the uniform analyticity condition, the regions of overlap between the analyticity regions and the monotonicity regions have a uniform positive minimum length.
Also, it is not difficult to adjust the statement to accommodate an infinite number of intervals.

\begin{proof}[Proof of Theorem~\ref{thm:Prandtl:particular}]
The construction of solutions to \eqref{eq:P:1}--\eqref{eq:P:3} in the case of multiple monotonicity regions is carried out in several steps:
\begin{itemize}
 \item In subsection~\ref{sec:P:analytic}  we construct the real analytic solution defined on $I_a\times \RR_+$ and prove higher regularity properties with respect to the $y$ variable.
 \item In subsection~\ref{sec:P:monotone} we use the result of Section~\ref{sec:P:Oleinik} to construct the uniformly local monotone solution defined on $I_m^{\pm} \times \RR_+$.
 \item In subsection~\ref{sec:P:glueing} we use the finite speed of propagation in the $x$ variable to glue the monotone and real-analytic solutions. 
 \item In subsection~\ref{sec:P:uniqueness} the uniqueness of solutions in this class is established.
 \end{itemize}
The details for each of the above steps are given next.
\end{proof}

\subsubsection{Construction of the analytic solution}
\label{sec:P:analytic}

Since we are working with solutions of low regularity with respect to the $y$ variable, and since we need to allow for sub-exponential decay of the initial data and the solution as $y \to \infty$, we use as in \cite{KukavicaVicol13a} the Euler-dependent transformation
\begin{align}
\by &= A(x,t) y \label{eq:by:def}\\
\bu(x,\by,t) &= u(x,y,t) - (1-\phi(\by)) U(x,t)\label{eq:bu:def}
\end{align}
with
\begin{align}
\phi = \phi(\bar y) =\langle \by \rangle^{-\theta}, \quad \langle \by \rangle = (1+ {\by}^2)^{1/2}
\end{align}
where $\theta$ is sufficiently large, to be specified later,
and $A$ is the solution of
\begin{align}
\partial_t A + U \partial_x A = A \partial_x U, \qquad A|_{t=0} = 1.
\end{align}
It is also convenient to denote $a(x,t) = \log A(x,t)$.
This change of variables turns the Prandtl equation \eqref{eq:P:1}--\eqref{eq:P:6} into
\begin{align}
&\partial_t \bu - A^2 \partial_{\by \by} \bu + N(\bu) + L(\bu) = F \label{eq:PR}\\
&\bu(x,\by,t)|_{\by=0} = 0, \quad \lim_{\by \to \infty} \bu(x,\by,t) = 0 \label{eq:PR:BC}
\end{align}
with the corresponding initial condition,
where we denoted by
\begin{align}
N(\bu) = \bu \; \dx \bu - \dx \dbyinv \bu \; \partial_{\by}  \bu + \partial_x a\;  \dbyinv \bu \; \partial_{\by} \bu\label{eq:PR:N}
\end{align}
the nonlinear part,
by
\begin{align}
L(\bu) & = \dx \dbyinv \bu \; \partial_{\by} \phi \; U + \dx \bu (1-\phi) U + \partial_{\by} \bu  \left( \dbyinv\phi \; \dx U - \dx a \; \dbyinv \phi\; U\right) \notag\\
& \quad - \dbyinv \bu\; \dx a \; \partial_{\by} \phi\; U + \bu (1-\phi) \partial_x U\label{eq:PR:L}
\end{align}
the linear part, and by
\begin{align}
F = \left( \phi(1-\phi) + \dbyinv \phi \; \partial_{\by} \phi\right)U \partial_x U  - \dx a \; \partial_{\by} \phi \; \dbyinv \phi \; U^2  - A^2\; \partial_{\by \by} \phi \; U  -  \phi \; \dx P \label{eq:PR:F}
\end{align}
the force. 
From \eqref{EQ07}, we recall the notation $\dbyinv f(\cdot,\by) = \int_0^{\by} f(\cdot,z) dz$.

Note that by assumption  the function $U(x,t)$ is assumed to be uniformly real-analytic for $x \in [-2\tau_0, 2\tau_0]$ with the radius of analyticity bounded from below by some $2\tau_0>0$, on some time interval $[0,T]$. In particular $U$ is bounded (in $x$ and $t$), and by possibly reducing the time interval ($2 T \leq \tau_0/ \|U\|_{L^\infty_{x,t}}$ from a ballistic estimate) we can ensure that the values of $A(x,t)$ in $[-\tau_0,\tau_0]\times[0,T]$ depend only on those of $A_0 = 1$ and on $U(x,t), \dx U(x,t)$ in $[-2\tau_0,2\tau_0]\times [0,T]$.
Having established this, the existence and uniqueness (in the class of real analytic functions) of $A(x,t)$ on a short time interval $[0,T]$ follows from the classical Cauchy-Kowalewski theorem. By possibly reducing $T$ we may furthermore ensure that $1/2 \leq A(x,t)\leq 2$ on $I_a \times [0,T]$. Let the uniform radius of real-analyticity of the function $A(x,t)$ on $[-\tau_0,\tau_0] \times[0,T]$ be bounded from below by some $\tau_U>0$, and let its analytic norm on this set be bounded from above by some constant $G_U$.

In order to present the analytic a priori estimates, we introduce 
\begin{align}
\rho = \rho(\by) = \langle \by \rangle^\alpha
\end{align}
and for $\tau>0$, and an interval $I \subset \RR$ we define the real-analytic norms
\begin{align}
\|\bu\|_{X_\tau(I)}^2 &= \sum_{m \geq 0} \|\rho \dx^m \bu\|_{L^2(I \times \RR_+ )}^2 \tau^{2m} (m+1)^4 (m!)^{-2}
\label{eq:X:tau:def} \\
\|\bu\|_{Y_\tau(I)}^2 &= \sum_{m \geq 1} \|\rho \dx^m \bu\|_{L^2(I \times \RR_+ )}^2 \tau^{2m-1} m (m+1)^4 (m!)^{-2}
\label{eq:Y:tau:def} \\
\|\bu\|_{Z_\tau(I)}^2 &= \sum_{m \geq 0} \|A \rho \partial_{\by} \dx^m \bu\|_{L^2(I \times \RR_+)}^2 \tau^{2m} (m+1)^4 (m!)^{-2}.
\label{eq:Z:tau:def}
\end{align}
Then, similarly to estimate (3.26) in \cite{KukavicaVicol13a}, we have the {\em a priori estimate}
\begin{align}
& \frac{d}{dt} \Vert  \bu \Vert_{X_\tau(I_a)}^2  + \Vert \bu \Vert_{Z_\tau(I_a)}^2  \notag\\
& \qquad  \leq  C_a(1+\tau^{-2}) (1 + \Vert \bu \Vert_{X_\tau(I_a)}^2)^2    +  \left (\dot{\tau} + C_a + C_a \tau^{-1}  \Vert \bu \Vert_{Z_\tau(I_a)} \right) \Vert \bu \Vert_{Y_\tau(I_a)}^2\label{eq:ODE:analytic:1}
\end{align}
whenever $\tau = \tau(t) \leq \eps_U \tau_U$ for some $\eps_U \approx (1+G_U)^{-1} \in (0,1]$, and for some positive constant $C_a$ which depends on $\alpha$,  $\theta$, the analyticity radius $\tau_U$ and analytic norm $G_U$ of $A(x,t)$ on $[-\tau_0,\tau_0] \times [0,T]$.

While we do not provide full details for the above estimate, we wish to emphasize one important aspect. In \cite{KukavicaVicol13a} the estimates are for the half-space $\RR \times \RR_+$ so that there are no boundary terms in $x$. On the other hand, in the setting of this paper the estimates are considered in the strip $I_a \times \RR_+$, and hence integration by parts with respect to the $x$ variable is not permitted. Even so, at {\em no stage} in the derivation of \eqref{eq:ODE:analytic:1} was integration by parts with respect to $x$ used. The one derivative loss with respect to $x$ in the nonlinear term is compensated by requiring the analyticity radius to decrease fast enough, and so integration by parts in $x$ is not needed. 

There is just one more technical aspect which is different in obtaining \eqref{eq:ODE:analytic:1} for $I_a\times \RR_+$ instead of for $\RR \times \RR_+$. As opposed to the case of the whole line (with decay at infinity), when working on a finite interval the one-dimensional Agmon inequality for a function $f \colon I \to \RR$ has a lower order term, i.e., $\|f\|_{L^\infty} \leq C \|f\|_{L^2}^{1/2} \|f'\|_{L^2}^{1/2} + C_I \|f\|_{L^2}$. 
To obtain \eqref{eq:ODE:analytic:1} one repeatedly uses this estimate with $f = \dx^k \bu$, $k\geq 0$, and $I = I_a$. These lower order term do not create any difficulties in the estimates.

In order to conclude the analytic a priori estimates, let the analyticity radius $\tau(t)$ solve the differential equation
\begin{align}
  \frac{d}{dt} (\tau^{2}) + 4 C_a \tau(0) + 4 C_a \Vert \bu(t) \Vert_{Z_{\tau(t)}(I_a)}= 0, \quad \tau(0) = \eps_U \tau_0
  \label{eq:ODE:tau}
\end{align}
for some $\eps_U \in (0,1]$ as above.
In particular, due to continuity in time of both the solution $\bu(t)$ in $X_{\tau(t)}(I_a)$ and $Z_{\tau(t)}(I_a)$, and of $\tau(t)$, this means that on a short time interval, the second term on the right side of \eqref{eq:ODE:analytic:1} is negative, and therefore on this short time interval we have
\begin{align*}
\tau(t)^{2}\geq \tau(0)^2 - 4 C_a \tau(0)  t - 4 C_a t^{1/2} \left(\int_0^t \Vert \bu(s) \Vert_{Z_{\tau(s)}}^2\, ds\right)^{1/2} \geq \frac{\tau(0)^2}{4}
\end{align*}
and 
\begin{align*}
\frac{d}{dt} \Vert \bu \Vert_{X_\tau(I_a)}^2  +   \Vert \bu \Vert_{Z_\tau(I_a)}^2 \leq C_a(1+ 4\tau(0)^{-2}) (1 + \Vert \bu \Vert_{X_\tau(I_a)}^2)^2 
\end{align*}
which in particular implies that 
\begin{align*}
  \int_{0}^{t} \Vert \bu(s) \Vert_{Z_{\tau(s)}(I_a)}^2\; ds \leq  1 +  2 \Vert   \bu_0 \Vert_{X_{\tau_0}(I_a)}^2.
\end{align*}
Therefore, there exits $T_a >0$, depending solely on $\tau_0, \| \bu_0\|_{X_{\tau_0}}, U, \alpha, \theta$, such that
\begin{align}
\sup_{t\in[0,T_a]} \| \bu(t)\|_{X_{\tau(t)}(I_a)}^2 + \int_0^{T_a} \left( \| \bu (t) \|_{Z_{\tau(t)}}^2 + \| \bu (t) \|_{Y_{\tau(t)}}^2  \right) dt \leq 1 + 2 \Vert   \bu_0 \Vert_{X_{\tau(0)}(I_a)}^2 
\label{eq:analytic:final:estimate}
\end{align}
and 
\begin{align*}
2 \tau(t) \geq \tau(0) = \eps_U \tau_0
\end{align*}
on this time interval.

The above a priori estimates can be made rigorous by constructing the analytic solution via Picard iteration. 
Namely, let $\bu^{(0)} = \bu_0$ and 
\begin{align*}
\dt \bu^{(n+1)} - A^2 \partial_{\by \by} \bu^{(n+1)} = F - N(\bu^{(n)}) - L(\bu^{(n)}), \qquad \bu^{(n+1)}(0) = \bu_0
\end{align*}
for $n\geq 0$, with homogeneous boundary conditions at $\by=0$ and $\by = \infty$. As in \cite[Section~5]{KukavicaTemamVicolZiane11}, using \eqref{eq:analytic:final:estimate} the sequence $\bu^{(n)}$ may be shown to be contracting in the space
\[
L^\infty([0,T_a];X_{{\tau}}(I_a)) \cap L^2((0,T_a); Y_{\tau}(I_a)) \cap L^2((0,T_a); Z_{\tau}(I_a))
\]
from which the existence of the real-analytic solution follows. 
We note that in order to establish uniqueness of the analytic solution on $I_a \times \RR_+$, one still needs to estimate the difference of two solutions in an analytic norm (as in \cite[Section~6]{KukavicaTemamVicolZiane11}). This is due to the lack of lateral boundary conditions which prevents one from integrating by parts in $x$.

It is clear from the uniform real-analyticity of $A(x,t)$ on $I_a\times[0,T_a]$ and the substitutions \eqref{eq:by:def}--\eqref{eq:by:def}, that the real-analyticity of $\bu$ implies the real-analyticity of $u$, with comparable radii of analyticity.

At this stage we note that the $L^2$ in time control on the $Z_\tau$ norm of the analytic solution, combined with the parabolic character (in $t$ and $y$) of the equation obeyed by $\bu$, yields higher regularity properties of the analytic solution with respect to the $y$ variable. This fact will be needed later in the proof when we glue the real-analytic and the monotone solutions.

\begin{lemma}[\bf Vorticity of the analytic solution] 
\label{lem:vort:dy}
Let $\omega = \dy u$ be the vorticity associated with the real analytic solution $u$, computed from $\bu$ via \eqref{eq:bu:def}. Then we have
\begin{align}
\sup_{t\in[0,T_a]} \| \rho(y) \omega \|_{H^1_x L^2_y(I_a \times \RR_+)}^2 
+ \int_0^{T_a} \|\rho(y) \dy \omega(t) \|_{H^1_x L^2_y(I_a \times \RR_+)}^2 dt
\leq C_{\omega,a}^2
\label{eq:vort}
\end{align}
where $C_{\omega,a}$ depends on $\alpha, \theta, \tau_U, G_U,\tau(0)$, and $\|\bu_0\|_{X_{\tau(0)}}$, and $\rho(y) = \langle y \rangle^\alpha$ with $\alpha > 1/2$.
\end{lemma}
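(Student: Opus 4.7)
\emph{Proof plan.} The plan is to work in the straightened variables $(x,\by)$ and establish the corresponding bounds for $\bu$ via a parabolic energy argument on the perturbed heat equation \eqref{eq:PR}; the result will then be transferred to the $(x,y)$ variables.

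First, I would reduce the claim. From $u = \bu + (1-\phi(\by))U(x,t)$ and $\by = A(x,t) y$ we read off
\[
\omega = A\, \partial_{\by} \bu - A\, (\partial_{\by}\phi)\, U, \qquad \dy \omega = A^2 \partial_{\by\by} \bu - A^2 (\partial_{\by\by}\phi)\, U.
\]
Since $1/2 \leq A \leq 2$ on $I_a \times [0, T_a]$ the weight $\langle y \rangle^\alpha$ is equivalent to $\langle \by \rangle^\alpha$, and the terms containing $\phi$ and $U$ are smooth and bounded. Thus \eqref{eq:vort} reduces to showing
\[
\sup_{t \in [0, T_a]} \|\langle \by \rangle^\alpha \partial_{\by} \bu(t)\|_{H^1_x L^2_{\by}(I_a \times \RR_+)}^2 + \int_0^{T_a} \| \langle \by\rangle^\alpha \partial_{\by\by} \bu \|_{H^1_x L^2_{\by}(I_a \times \RR_+)}^2\, dt \lesssim 1.
\]

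Next, for $k \in \{0,1\}$ I apply $\dx^k$ to \eqref{eq:PR} and test against $-\langle \by \rangle^{2\alpha} \partial_{\by\by} \dx^k \bu$ on $I_a \times \RR_+$. Integration by parts is used \emph{only in $\by$}, which is permitted by the boundary conditions \eqref{eq:PR:BC} at $\by=0$ and the decay as $\by\to\infty$; no boundary terms on $\partial I_a$ arise, as $\bu$ has no lateral boundary data. The commutator of the weight $\langle \by\rangle^{2\alpha}$ with $\partial_{\by\by}$ produces a lower-order $\|\langle \by \rangle^\alpha \dx^k \partial_{\by} \bu\|_{L^2}^2$ contribution on the right that is absorbed by Gronwall. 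The resulting identity has the form
\[
\tfrac{d}{dt} \| \langle \by \rangle^\alpha \partial_{\by} \dx^k \bu \|_{L^2}^2 + \|A \langle \by \rangle^\alpha \partial_{\by\by} \dx^k \bu\|_{L^2}^2 \leq C \| \langle \by \rangle^\alpha \partial_{\by} \dx^k \bu\|_{L^2}^2 + |R_k(t)|,
\]
where $R_k$ collects the pairings of $\dx^k (F - N(\bu) - L(\bu))$ against the test function. Every term inside $N(\bu)$ and $L(\bu)$ is a product of two factors carrying at most one $\dx$, one $\partial_{\by}$, or one $\dbyinv$, multiplied by smooth coefficients built from $\phi, A, U$. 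Cauchy-Schwarz splits each product, one factor being placed in $L^\infty_x L^2_{\by}$ via the one-dimensional Agmon inequality on $I_a$ (with the lower-order term noted after \eqref{eq:ODE:analytic:1}) applied to $\dx^m \bu$ with $m \leq 2$, while the $\dbyinv$-factors are dominated using Hardy's inequality in $\by$, which is valid since $\alpha > 1/2$. The $m\leq 2$ $x$-derivatives are controlled uniformly in time by the $X_\tau$ piece of \eqref{eq:analytic:final:estimate}, and Young's inequality absorbs a quarter of $\|A \langle \by \rangle^\alpha \partial_{\by\by} \dx^k \bu\|_{L^2}^2$ into the left-hand side. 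The initial datum satisfies $\langle \by \rangle^\alpha \partial_{\by} \bu_0 \in H^1_x L^2_{\by}(I_a \times \RR_+)$ by the smoothness of $u_0$ inherited from the global set-up of Theorem~\ref{thm:Prandtl:particular}, so summation over $k=0,1$ and Gronwall's inequality on $[0,T_a]$ yield \eqref{eq:vort}.

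The main obstacle is the upgrade from the $L^2_t$ control on $\|\bu\|_{Z_\tau}$ in \eqref{eq:analytic:final:estimate} to an $L^\infty_t$ control on $\partial_{\by}\bu$ in $H^1_x L^2_{\by}$. Specifically, the terms $\dx \dbyinv \bu \cdot \partial_{\by}\bu$ (and their $\dx$-derivatives) inside $N(\bu)$ mix an antiderivative in $\by$ with a $\by$-derivative at the top order, so the combination of Hardy (to remove $\dbyinv$), Agmon (to place $\dx \bu$ in $L^\infty_x$), and the weight balance $\alpha > 1/2$ must be executed without losing the top $\by$-derivative. This is the point at which the integration by parts in $x$, which would normally simplify such a term but is forbidden here by the absence of lateral boundary data on $\partial I_a$, must be replaced by the interpolation tools listed above.
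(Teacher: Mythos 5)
Your proposal is correct and follows essentially the same route as the paper: reduce to $\partial_{\by}\bu$ via \eqref{eq:by:def}--\eqref{eq:bu:def}, test $\dx^k$ of \eqref{eq:PR} against $-\rho^2\partial_{\by\by}\dx^k\bu$, integrate by parts only in $\by$, and close using Agmon/Hardy together with the a~priori bound \eqref{eq:analytic:final:estimate}. The only stylistic difference is that you close with Gr\"onwall on $\|\rho\partial_{\by}\dx^k\bu\|_{L^2}^2$, whereas the paper instead dominates those lower-order factors by $\|\bu\|_{Z_\tau}$ (which is $L^2$ in time) and integrates directly; both close.
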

\begin{proof}
For simplicity of the presentation, we only give the proof of the estimates in $L^2_x L^2_y(I_a \times \RR_+)$ for $\rho \omega$ and $\rho \dy \omega$. The estimate with an additional $\dx$ derivative follows {\em mutatis mutandi}. 
Multiplying \eqref{eq:PR} by $-\partial_{\by \by} \bu\; \rho^2(\by)$ and integrating on $I_a \times \RR_+$ yields
\begin{align}
 &\frac{d}{dt} \| \rho \partial_{\by} \bu \|_{L^2(I_a\times \RR_+)}^2 +  \| \rho A \partial_{\by \by} \bu \|_{L^2(I_a\times \RR_+)}^2  \notag\\
 &\qquad \leq C \| \partial_{\by} \rho \partial_{\by} \bu \|_{L^2(I_a \times \RR_+)}^2 + C \| \rho \left( F - L(\bu) - N(\bu) \right) \|_{L^2(I_a\times \RR_+)}^2 
\label{eq:vorticity:analytic}
\end{align}
where we used the homogeneous boundary conditions with respect to $\by$ of $\bu$, the fact that $1/2 \leq A \leq 2$, and the Cauchy-Schwartz inequality.

By definition of the $Z_\tau$ norm in \eqref{eq:Z:tau:def}, we have
\begin{align}
\| \partial_{\by} \rho \partial_{\by} \bu \|_{L^2(I_a \times \RR_+)}^2 \leq C \|\bu\|_{Z_\tau}^2
\label{eq:vort:a:t}
\end{align}
for some constant $C$ that depends on $\alpha$.

We now recall the 1-dimensional Agmon inequalities with respect to the vertical variable
\begin{align}
\|f\|_{L^4_y(\RR_+)}  \leq C \| f\|_{L^2_y(\RR_+)}^{3/4} \|\dy f\|_{L^2_y(\RR_+)}^{1/4}
\label{eq:Agmon:y:1}
\end{align}
which holds if $f=0$ at $y=0$,
and a Hardy type inequality
\begin{align}
\| \dy^{-1} f\|_{L^\infty_y(\RR_+)} \leq C \| \rho f \|_{L^2_y(\RR_+)}
\label{eq:Agmon:y:2}
\end{align}
where $\rho(y) = \langle y \rangle^\alpha$ with $\alpha>1/2$. With respect to the horizontal variable we shall use
\begin{align}
\| f\|_{L^4_x(I)} \leq C \|f\|_{L^2_x(I)}^{3/4} \| \dx f \|_{L^2_x(I)}^{1/4} + C \|f\|_{L^2_x(I)} \leq C \|f\|_{H^1_x(I)}
\label{eq:Agmon:x:1}
\end{align}
and 
\begin{align}
\| f\|_{L^\infty_x(I)} \leq C \|f\|_{L^2_x(I)}^{1/2} \| \dx f \|_{L^2_x(I)}^{1/2} + C \|f\|_{L^2_x(I)} \leq C \|f\|_{H^1_x(I)}
\label{eq:Agmon:x:2}
\end{align}
for some positive constant $C$ that may depend on $I$.

Using the inequalities \eqref{eq:Agmon:y:1}--\eqref{eq:Agmon:x:2} and recalling the definition of $N(\bu)$ from \eqref{eq:PR:N}, and $\rho(\by) = \langle \by \rangle^\alpha \geq 1$, with $1/2 \leq A \leq 2$, we obtain
\begin{align}
\| \rho N(\bu) \|_{L^2} 
&\leq \| \rho \bu \|_{L^4} \| \dx \bu \|_{L^4} + \| \dx \dy^{-1} \bu\|_{L^\infty} \| \rho \dy \bu \|_{L^2} + C\| \dy^{-1} \bu\|_{L^\infty} \| \rho \dy \bu \|_{L^2} \notag\\
&\leq C \| \rho \bu \|_{H^2_x L^2_y}^{3/2} \| A \rho \partial_{\by} \bu\|_{H^2_x L^2_y}^{1/2} + C \| \rho \by \|_{H^2_x L^2_y} \|A \rho \partial_{\by} \bu\|_{L^2} \notag\\
&\leq C (1+ \tau(0)^{-4}) \|\bu\|_{X_\tau}^{3/2} \|\bu\|_{Z_\tau}^{1/2} + C (1+\tau(0)^{-2}) \|\bu \|_{X_\tau} \|\bu\|_{Z_\tau}
\label{eq:vort:a:N}
\end{align}
where we have also used  $\tau(0)/2 \leq \tau(t) \leq \tau(0)$. Similarly, recalling the definition of the linear term in \eqref{eq:PR:L} one may show that 
\begin{align}
\| \rho L(\bu) \|_{L^2}   
&\leq C_U \| \rho \bu\|_{H^1_x L^2_y} + C_U \|A \rho \partial_{\by} \bu\|_{L^2_x L^2_y} \notag\\
&\leq C_U (1+\tau(0)^{-1}) \| \bu \|_{X_\tau} + C_U \|\bu\|_{Z_\tau}
\label{eq:vort:a:L}
\end{align}
and 
\begin{align}
\| \rho F \|_{L^2}   \leq C_U
\label{eq:vort:a:F}
\end{align}
where $ C_U = C_U (G_U,\tau_U,\alpha,\theta)>0$ is a constant. Since by construction (cf.~\eqref{eq:analytic:final:estimate}) we have that 
\begin{align*}
\sup_{t\in[0,T_a]} \| \bu(t)\|_{X_{\tau(t)}(I_a)}^2 + \int_0^{T_a} \| \bu (t) \|_{Z_{\tau(t)}}^2    dt \leq 1 + 2 \|\bu_0\|_{X_{\tau(0)}}^2 
\end{align*}
by combining \eqref{eq:vorticity:analytic} with \eqref{eq:vort:a:t} and \eqref{eq:vort:a:N}--\eqref{eq:vort:a:F}, we obtain
\begin{align*}
\sup_{t\in[0,T_a]} \| \rho(\by) \partial_{\by} \bu \|_{L^2(I_a \times \RR_+)}^2 
+ \int_0^{T_a} \| \rho(\by) A \partial_{\by \by} \bu(t) \|_{L^2(I_a \times \RR_+)}^2 dt
\leq C
\end{align*}
where $C>0$ depends on $C_U$, $\tau(0)$, and $\|\bu_0\|_{X_{\tau(0)}}$.
By translating back $\partial_{\by}$ to $\dy$ and $\bu$ to $u$, and using that $1/2 \leq A \leq 2$ on $I_a \times [0,T_a]$, we conclude the proof of the lemma.
\end{proof}

\subsubsection{Construction of the monotone solution}
\label{sec:P:monotone}

In order to explore certain nonlinear cancellations present in the two dimensional equations, following~\cite{MasmoudiWong12a} we look at the equation obeyed by the vorticity $\omega = \dy u$ which reads
\begin{align}
&\dt \omega - \partial_{yy} \omega + u \dx \omega + v \dy \omega = 0\\
&u(x,y,t) = U(x,t) - \int_{y}^\infty \omega(x,z,t) dz\\
&v(x,y,t) = - \int_0^y \dx u(x,z,t) dz\\
&\dy \omega(x,y,t)|_{y=0} = \partial_x P(x,t) \label{eq:omega:y=0}
\end{align}
supplemented with the initial condition 
\begin{align}
\omega(x,y,0) = \omega_0(x,y) = \partial_y u_0(x,y).
\end{align}

Due to the assumptions \eqref{eq:monotone:IC:1}--\eqref{eq:monotone:IC:2} on the initial velocity $u_0$ and initial vorticity $\omega_0$ on the intervals $I_m^+ = [\tau_0/2,\infty)$ and $I_m^- = (-\infty,-\tau_0/2]$, we may extend 
the functions $u_0(x,y)$ and $\omega_0(x,y)$ to the half-space $\RR \times \RR_+$, such that the {\em positive extension} $u_0^+, \omega_0^+, U^+$ obeys
\begin{align}
(u_0^+, \omega_0^+) = (u_0,\omega_0) \mbox{ on } I_m^+ \times \RR_+ \quad \mbox{and} \quad U^+(x,t) = U(x,t) \mbox{ on } I_m^+ \times [0,T]
\end{align}
and we have
\begin{align}
u_0^+ - U^+ \in H^{s,\gamma-1}_{\rm uloc}(\RR) \quad \mbox{and} \quad \omega_0^+ \in H^{s,\gamma}_{\sigma,3\delta/2,{\rm uloc}}(\RR)
\end{align}
with at most doubled uniform local norm.
Similarly, a {\em negative extension} $u_0^-, \omega_0^-, U^-$ may be defined such that 
\begin{align}
(u_0^-, \omega_0^-) = (u_0,\omega_0) \mbox{ on } I_m^- \times \RR \quad \mbox{and} \quad U^-(x,t) = U(x,y) \mbox{ on } I_m^- \times [0,T].
\end{align}
It obeys
\begin{align}
u_0^- - U^- \in H^{s,\gamma-1}_{\rm uloc} (\RR) \quad \mbox{and} \quad -\omega_0^- \in H^{s,\gamma}_{\sigma,3\delta/2,{\rm uloc}}(\RR).
\end{align}
We note that the corresponding extensions of the underlying Euler flow $U$ were necessary in order to maintain the compatibility conditions for $u_0^\pm(x,y)$ as $y \to \infty$.
The above mentioned extension is possible because functions in a Sobolev space $H^s$ may be localized (as opposed to real-analytic functions). It is also clear that the extension may be chosen in such a way that  the monotonicity constant $2\delta$ of \eqref{eq:Linfty:uloc} shrinks by a given factor.

With the above extensions in mind, we now solve the Prandtl equations on the half-space $\RR\times \RR_+$ with the corresponding initial data $(u_0^+,\omega_0^+)$ and $(u_0^-,\omega_0^-)$. The existence on a short time interval $[0,T_m]$ of a ``positive'' solution $(u^+,\omega^+)$ and a ``negative'' solution $(u^-,\omega^-)$ from these initial conditions is then guaranteed directly by  Theorem~\ref{thm:Oleinik:global} above. The solutions obey 
\begin{align}
u^\pm - U^\pm \in L^\infty([0,T_m];H^{s,\gamma-1}_{\rm uloc}(I_m^\pm \times \RR_+)) 
\end{align}
and
\begin{align}
\omega^\pm \in L^\infty([0,T_m];H^{s,\gamma}_{\sigma,\delta, {\rm uloc}}(I_m^\pm\times \RR_+)).
\end{align}

\subsubsection{Glueing the analytic and monotone solutions}
\label{sec:P:glueing}

Let $T_* = \min\{ T_a,T_m\}$. In the above sections we have constructed a unique real-analytic solution $u$ on $[-\tau_0,\tau_0] \times \RR_+ \times [0,T_*)$,  which may be computed from $\bar u$ via \eqref{eq:bu:def} and two monotone solutions $u^+$ and $u^-$ on $\RR\times \RR_+ \times [0,T_*)$. We shall now glue these three solutions in a suitable way in order to obtain a single solution $u^P$ of the Prandtl system on $\RR \times \RR_+ \times[0,T_*)$ which agrees with the positive solution for $x \in I_m^+= [\tau_0/2,\infty)$, with the negative one for $x \in I_m^- = (-\infty,-\tau_0/2]$, and with the analytic one for $x \in I_a = [-\tau_0,\tau_0]$. In order to achieve this, it is clear that first we need to prove that the the analytic solution agrees with the monotone solutions on the domain on which they overlap. The difficulty of not being able to localize real-analytic functions is overcome by using the {finite speed of propagation} in the equations {with respect to the $x$-variable} in the vorticity equation. 

We shall only give details for the overlap of $u$ and $u^+$, the case of $u$ and $u^-$ being the same.
By definition we have that $u_0 = u_0^+$ on $[\tau_0/2,\tau_0]\times \RR^+$, and therefore the initial vorticities match as well, i.e., $\omega_0 = \omega_0^+$. We shall now prove that there exists $M>0$ such that $u(\cdot,t) = u^+(\cdot,t)$ and $\omega(\cdot,t) = \omega^+(\cdot,t)$ on the strip 
\[
I_t \times \RR_+= [\tau_0/2 + M t  ,\tau_0 - M t] \times \RR_+,
\]
for all $t \in [0,T^*)$, where $T^* = \min \{ T_*, \tau_0/(4M) \}$.

For this purpose we localize the uniqueness argument in~\cite[Section 6.2]{MasmoudiWong12a}, but only in the $x$ variable. We let 
\begin{align}
\tu = u - u^+ \quad \mbox{and} \quad \tom = \omega - \omega^+,
\end{align}
where $\omega$ is the solution which is real-analytic with respect to $x$ and has $W^{1,2}$ regularity in the $y$-variable by Lemma~\ref{lem:vort:dy}, and $\omega^+(x,y) \geq \delta (1+y)^{-\sigma} > 0$  is the positive $H^s$ solution. The equation obeyed by $\tilde \omega$ reads
\begin{align}
\dt \tom  - \dyy \tom + u \dx \tom + v \dy \tom + \tilde u \dx \omega^+ + \tilde v \dy \omega^+ = 0,
\label{eq:tilde:omega}
\end{align}
where 
\begin{align*}
u = \dy^{-1} \omega, \quad  v = - \dx \dy^{-1} u, \quad \tilde u = \dy^{-1} \tom, \quad \tilde v = - \dx \dy^{-1} \tilde u.
\end{align*}
while the equation obeyed by $\tu$ is
\begin{align}
\dt \tu - \dyy \tu + u \dx \tu + v \dy \tu + \tu \dx u^+ + \tilde v \dy u^+ = 0.
\label{eq:tilde:u}
\end{align}
In order to explore the nonlinear cancellation which permits solving the Prandtl equation in Sobolev spaces, we consider the function
\begin{align}
g = \tom - \tu \frac{\dy \omega^+}{\omega^+} = \tom - \tu \Omega^+= \omega^+ \dy \left( \frac{\tu}{\omega^+}\right)
\label{eq:g:def}
\end{align}
where we denoted 
\[
\Omega^+ = \dy \log(\omega^+)
\] 
which by construction obeys
\begin{align}
(1+y) |\Omega^+(x,y)| + (1+y) |\dx \Omega^+(x,y)| + (1+y)^2 |\dy \Omega^+(x,y)| \leq \delta^{-2} + \delta^{-4}
\label{eq:OMEGA:bounds}
\end{align}
for $(x,y) \in I_t \times \RR_+$.

Our goal is to show that 
\begin{align}
\| g\|_{L^2(I_t \times \RR_+)}^2 = 0 \label{eq:patching:to:do}
\end{align}
for all $t \in [0,T^*)$, for a suitably chosen constant $M$ in the definition of $I_t$.

First, we explain why $g(t) = 0$ on $I_t \times \RR_+$ implies $\tu = \tom = 0$ on this shrinking strip.
If $g = 0$, since $\omega^+ \geq \delta (1+y)^{-\sigma} > 0$, it follows by \eqref{eq:g:def} that
\begin{align*}
\tilde u(x,y,t) = \omega^+(x,y,t) f(x,t)
\end{align*}
for some function $f(x,t)$. Since by construction we have $\delta \leq  \omega^+ (x,0) \leq \delta^{-2}$ it follows that $f(x,t) = 0$, since in view of the existing boundary condition $\tilde u (x,0,t) = 0$. This proves that $\tilde u(t) = 0$ and thus $\tom(t) = 0$ on $I_t \times \RR_+$, as desired.

It is left to prove \eqref{eq:patching:to:do}. The equation obeyed by $g$ is
\begin{align}
\left( \dt - \dyy + u \dx + v \dy \right) g = - 2 g \dy \Omega^+ - \tu \left( \tu \dx \Omega^+ + \tilde v \dy \Omega^+ \right)
\label{eq:g:PDE}
\end{align}
(cf.~the equation (6.13) in \cite{MasmoudiWong12a} for details).
The boundary conditions are given by 
\begin{align}
\left( \dy g + \Omega^+ g\right)|_{y=0} = 0 \quad \mbox{and} \quad \lim_{y\to \infty} g = 0.
\label{eq:g:BC}
\end{align}
To obtain the boundary condition for $g$ at $y=0$ we use \eqref{eq:P:4} and \eqref{eq:omega:y=0}. For $y\to \infty$ we have used that $\Omega^+ \to 0$ and $\tilde u = 0$ as $y \to \infty$,  and that $\tilde \omega = \omega -\omega^+ \to 0 - 0 = 0$ as $y\to \infty$, since by continuity  the real-analytic solution is also monotone on $[0,T^*)$.

Upon multiplying \eqref{eq:g:PDE} with $g$ and integrating over $I_t \times \RR_+$ we obtain
\begin{align}
&\frac 12 \frac{d}{dt} \int_{I_t \times \RR_+}  g(x,y,t)^2 dx dy +\frac{M}{2}  \int_{\RR_+} \left( g(\tau_0/2 + Mt,y)^2 + g(\tau_0 - Mt,y)^2 \right) dy \notag \\
&\qquad =  \int_{I_t \times \RR_+} g \left(\partial_{yy} g -  u \dx g - v \dy g - 2 g \dy \Omega^+ - \tu^2  \dx \Omega^+ +\tu  \tilde v \dy \Omega^+  \right) dx dy
\notag \\
&= J_1 + J_2 + J_3 + J_4 + J_5 + J_6. 
\label{eq:g:ODE:1}
\end{align}
Integrating by parts in the $y$-variable and using \eqref{eq:g:BC} we arrive at 
\begin{align}
J_1 &= - \int_{I_t \times \RR_+} |\dy g|^2  dx dy  + \int_{I_t} \Omega^+(x,0) g(x,0)^2 dx \notag\\
&\leq - \frac 12 \| \dy g\|_{L^2(I_t \times \RR_+)}^2     +  C_\delta \| g\|_{L^2(I_t \times \RR_+)}^2
\label{eq:J1}
\end{align} 
by appealing to the trace theorem $\|g(\cdot,0)\|_{L^2(I_t)} \leq C \| g \|_{L^2(I_t \times \RR_+)} \| \dy g \|_{L^2(I_t \times \RR_+)}$ and estimate \eqref{eq:OMEGA:bounds}.

For the first pair of transport terms along the flow of the analytic solution, we integrate by parts with respect to $x$ and $y$ respectively, use the incompressibility condition $\dx u + \dy v = 0 $ and the boundary conditions on $u,v,g$ to obtain that
\begin{align}
J_2 + J_3
&=  \frac 12 \int_{\RR^+} u(\tau_0/2+Mt,y) g(\tau_0/2+Mt,y)^2 -  u(\tau_0-Mt,y)  g(\tau_0-Mt,y)^2 dy \notag\\
& \leq \frac 12 \left( \sup_{(x,y) \in I_a\times\RR_+} |u| \right) \int_{\RR_+} \left( g(\tau_0/2 + Mt,y)^2 + g(\tau_0 - Mt,y)^2 \right) dy \notag\\
&\leq \frac M2   \int_{\RR_+} \left( g(\tau_0/2 + Mt,y)^2 + g(\tau_0 - Mt,y)^2 \right) dy
\label{eq:J23}
\end{align}
by choosing $M$ large enough. In particular, using Lemma~\ref{lem:vort:dy} we obtain the bound
\begin{align}
\sup_{(x,y) \in I_a\times\RR_+} |u| \leq \| \rho \omega\|_{L^2_x L^2_y}^{1/2} \|\rho \dx \omega \|_{L^2_x L^2_y}^{1/2} \leq C_{\omega,a}
\label{eq:analytic:u:L:infty}
\end{align}
which is possible since $\rho(y) = \langle y \rangle^\alpha$ and $\alpha>1/2$, and therefore it is sufficient to choose 
\begin{align}
M \geq C_{\omega,a}
\label{eq:M:choice}
\end{align}
in order to absorb $J_2+J_3$ into the right hand side of \eqref{eq:g:ODE:1}. For the term $J_4$, directly from H\"older and \eqref{eq:OMEGA:bounds} we have
\begin{align}
J_4 \leq C_\delta \|g\|_{L^2(I_t\times \RR_+)}^2 
\label{eq:J4}
\end{align}
For $J_5$, using \eqref{eq:Linfty:uloc}, \eqref{eq:OMEGA:bounds}, and \eqref{eq:analytic:u:L:infty} we have
\begin{align}
J_5 
&\leq \|g\|_{L^2(I_t \times \RR_+)} \|(1+y)^{-1} \tilde u\|_{L^2(I_t \times \RR_+)} \| (1+y) \dx \Omega^+ \|_{L^\infty(I_t \times \RR_+)} \| u - u^+ \|_{L^\infty(I_t \times \RR_+)} \notag \\
&\leq C \|g\|_{L^2} \|(1+y)^{-1} \tilde u\|_{L^2}
\end{align}
with $C$ depending on $\delta$, the norm of $\omega^+$ in $L^\infty([0,T^*];H^{s,\gamma}_{\sigma,2\delta}(I_m^+ \times \RR_+))$ and on $C_{\omega,a}$. Now, similarly to \cite[Claim 6.5]{MasmoudiWong12b}, using that $\delta \leq (1+y)^\sigma \omega^+(x,y) \leq \delta^{-1}$,  the boundary condition on $\tilde u$, and integrating by parts in $y$, we estimate
\begin{align}
\|(1+y)^{-1} \tilde u\|_{L^2(I_t \times \RR_+)} 
&= \left\| (1+y)^\sigma \omega^+ (1+y)^{-1-\sigma} \frac{\tilde u}{\omega^+} \right\|_{L^2(I_t \times \RR_+)}   \notag\\
&\leq C_{\delta,\sigma} \left\| (1+y)^{-\sigma} \dy \left( \frac{\tilde u}{\omega^+} \right) \right\|_{L^2(I_t \times \RR_+)} \notag\\
&\leq C_{\delta,\sigma} \|g\|_{L^2(I_t \times \RR_+)}.
\label{eq:u:to:g:trick}
\end{align}
Thus we obtain
\begin{align}
J_5 \leq C \|g\|_{L^2(I_t \times \RR_+)}^2.
\label{eq:J5}
\end{align}
Similarly, we have
\begin{align}
J_6 
&\leq \|g\|_{L^2(I_t \times \RR_+)} \|(1+y)^{-1} \tilde u\|_{L^2(I_t \times \RR_+)} \| (1+y)^2 \dy \Omega^+ \|_{L^\infty(I_t \times \RR_+)} \| (1+y)^{-1} (v - v^+) \|_{L^\infty(I_t \times \RR_+)} \notag\\
&\leq C \|g\|_{L^2(I_t \times \RR_+)}^2
\label{eq:J6}
\end{align}
by using \eqref{eq:u:to:g:trick} and the bounds available on $(1+y)^{-1} v$ and $(1+y)^{-1} v^+$ in $L^\infty([0,T^*];L^\infty(I_t \times \RR_+))$.

Combining \eqref{eq:g:ODE:1}, \eqref{eq:J1}, \eqref{eq:J23}, \eqref{eq:J4}, \eqref{eq:J5}, and \eqref{eq:J6}, we obtain that if $M$ is chosen so that it exceeds the maximal velocity of the analytic solution, i.e.,~\eqref{eq:M:choice} holds, then by Gr\"onwall
\begin{align}
\|g(t)\|_{L^2(I_t\times \RR_+)} \leq \|g(0)\|_{L^2(I_0 \times \RR_+)} \exp(C t) 
\label{eq:g:Gronwall}
\end{align}
which concludes the proof of \eqref{eq:patching:to:do}, since $g_0 = 0$. This concludes the proof that the real-analytic and the monotone solutions agree for all $t \in [0,T^*]$ for $(x,y) \in I_{T^*}\times \RR_+ =  [\tau_0/2 + M T^*, \tau_0 - M T^*]$. Therefore we can patch the analytic and monotone solutions together, and we obtain a global in $x$, local in time, solution of the Prandtl system.

\subsubsection{Uniqueness}
\label{sec:P:uniqueness}
The uniqueness holds in the sense that two solutions in the class given by the theorem are the same. To see this, note that the interval of monotonicity $I_m^+$ (respectively $I_m^-$) overlaps with the interval of analyticity $I_a$, with an overlap of initial size $\tau_0/2$. Using an argument that is identical to the proof of the glueing one given in subsection~\ref{sec:P:glueing} above, we may thus establish the uniqueness of solutions on shrinking monotonicity and analyticity intervals. However, since the speed of this shrinking is finite, by letting the time of existence be sufficiently small, the proof of uniqueness is established.

\section{The main result for the hydrostatic Euler equations} \label{sec:HE:proof}

In the two dimensional setting it is convenient to study the evolution of the vorticity $\omega = \dy u$. Indeed, applying $\dy$ to \eqref{eq:HE:1}, and using \eqref{eq:HE:2}--\eqref{eq:HE:3} we obtain the nonlinear transport equation
\begin{align} 
\dt \omega + u \dx \omega + v \dy \omega = 0 \label{eq:vorticity},
\end{align}
where, using the notation in \cite{MasmoudiWong12b}, one may compute $(u,v)$ from $\omega$ via
\begin{align} 
u = - \dy \AA(\omega) \mbox{ and } v = \dx \AA(\omega) \label{eq:Biot-Savart}
\end{align}
where the stream function $\AA(\omega)$ solves 
\begin{align*}
- \dyy \AA(\omega) = \omega
\end{align*} 
with the boundary condition 
\begin{align*}
\AA(\omega)|_{y=0,1} = 0.
\end{align*} 
Since we are working in the setting where \eqref{eq:mean} holds, it is not difficult to verify that in the smooth category $(u,v,p)$ solves \eqref{eq:HE:1}--\eqref{eq:HE:4} and \eqref{eq:mean}, if and only if $(u,v,\omega)$ solves \eqref{eq:vorticity}--\eqref{eq:Biot-Savart}. We also have the following estimates for $u$ and $v$ in terms of $\omega$. 

\begin{lemma}  
\label{lemma:u:v:omega}
Let $\alpha = (\alpha_1, \alpha_2) \in {\mathbb N}_0^2$ be a multi-index, and let $u$ and $v$ be determined from the smooth function $\omega$ via \eqref{eq:Biot-Savart}. Also let $\DD '$ be a cylindrical subset of $\DD$, i.e., $\DD' = \Omega \times (0,1)$ for some open set $\Omega \subset \RR$.  If $\alpha_2 =0$, we have
\begin{align}
\|\partial^\alpha u \|_{L^p(\DD')}  \leq C \| \dx^{\alpha_1} \omega \|_{L^p(\DD')} \leq C \| \omega\|_{W^{|\alpha|,p}(\DD')} \label{eq:u:w:Bad}
\end{align}
while if $\alpha_2 \geq 1$ we may bound
\begin{align}
\|\partial^\alpha u \|_{L^p(\DD')}  =  \| \dx^{\alpha_1} \dy^{\alpha_2 -1} \omega \|_{L^p(\DD')} \leq C \| \omega\|_{W^{|\alpha|-1,p}(\DD')}\label{eq:u:w}
\end{align}
for all $2 \leq p \leq \infty$. 
Similarly, if $\alpha_2 = 1$, then 
\begin{align}
\| \partial^\alpha v \|_{L^p(\DD')} \leq C  \| \dx^{|\alpha|} \omega \|_{L^p(\DD')} \leq C \| \omega\|_{W^{|\alpha|,p}(\DD')} \label{eq:v:w:BAD}
\end{align}
and if $\alpha_2 \geq 2$, we may estimate
\begin{align}
\| \partial^\alpha v \|_{L^p(\DD')}=  \| \dx^{\alpha_1 +1} \dy^{\alpha_2 -2} \omega \|_{L^p(\DD')} \leq C \| \omega\|_{W^{|\alpha|-1,p}(\DD')} \label{eq:v:w}
\end{align}
for all $2 \leq p \leq \infty$. 
\end{lemma}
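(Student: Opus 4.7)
The plan is to exploit the explicit structure of the Biot--Savart operator $\omega \mapsto \AA(\omega)$, which is \emph{local in the $x$-variable} and non-local only in $y$. Since $-\dyy \AA(\omega) = \omega$ with homogeneous Dirichlet conditions at $y=0$ and $y=1$, I would begin from the Green's function representation
\begin{align*}
\AA(\omega)(x,y) = \int_0^1 G(y,z)\, \omega(x,z)\, dz,\qquad G(y,z) = \min(y,z) - yz.
\end{align*}
Differentiating in $y$ yields $u(x,y) = -\int_0^1 K(y,z)\, \omega(x,z)\, dz$ with kernel $K(y,z) = \partial_y G(y,z)$ satisfying $\sup_{(y,z)\in[0,1]^2} |K(y,z)| \leq 1$ and $\int_0^1 |K(y,z)|\,dz \leq 1$. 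This representation is key because the integration is over the full vertical range $(0,1)$, matching the vertical extent of any cylindrical $\DD' = \Omega\times(0,1)$; hence the operator preserves localization in $x$.

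For the $u$ estimates I would split on $\alpha_2$. If $\alpha_2 \geq 1$, then $\dy u = -\dyy \AA(\omega) = \omega$, so $\partial^\alpha u = \dx^{\alpha_1}\dy^{\alpha_2-1}\omega$ \emph{as an identity}, from which \eqref{eq:u:w} is immediate. If $\alpha_2 = 0$, I would commute $\dx^{\alpha_1}$ through the kernel (since it acts only in $x$) and apply Minkowski's integral inequality in the $y$-direction, using $|K|\leq 1$, to obtain
\begin{align*}
\|\dx^{\alpha_1}u\|_{L^p(\DD')} \leq \sup_{y\in(0,1)} \int_0^1 |K(y,z)|\,dz \cdot \|\dx^{\alpha_1}\omega\|_{L^p(\DD')} \leq \|\dx^{\alpha_1}\omega\|_{L^p(\DD')},
\end{align*}
which establishes \eqref{eq:u:w:Bad}.

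The estimates on $v$ then follow by the same case split. When $\alpha_2\geq 2$, differentiating $\dy v = \dx \dy \AA(\omega) = -\dx u$ once more in $y$ and invoking $\dy u = \omega$ gives $\dyy v = -\dx \omega$, so $\partial^\alpha v = -\dx^{\alpha_1+1}\dy^{\alpha_2-2}\omega$, which proves \eqref{eq:v:w}. When $\alpha_2 = 1$, I would use $\dy v = -\dx u$ to reduce to the $\alpha_2 = 0$ case for $u$ with $\alpha_1$ replaced by $\alpha_1+1$, yielding \eqref{eq:v:w:BAD}. I do not anticipate a genuine obstacle here: the only subtlety is ensuring that the non-locality of the Biot--Savart law sits entirely in the $y$-direction, which is why the assumption that $\DD'$ is cylindrical with full vertical extent is essential. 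The final $W^{|\alpha|,p}$ and $W^{|\alpha|-1,p}$ bounds on the right-hand sides are then straightforward from the identities produced above.
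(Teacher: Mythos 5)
Your proof is correct, and for the cases $\alpha_2 \geq 1$ (for $u$) and $\alpha_2 \geq 1$ (for $v$) it reproduces exactly the paper's identities $\partial^\alpha u = \dx^{\alpha_1}\dy^{\alpha_2-1}\omega$, $\partial^\alpha v = -\dx^{\alpha_1+1}\dy^{\alpha_2-2}\omega$, and the reduction $\dy v = -\dx u$. The only real divergence is in the $\alpha_2 = 0$ case for $u$: the paper observes that the compatibility condition $\int_0^1 \dx^{\alpha_1} u\,dy = 0$ holds and invokes the Poincar\'e inequality in $y$, whereas you write out the Green's function $G(y,z) = \min(y,z) - yz$ of $-\dyy$ with Dirichlet data and bound the kernel $K = \partial_y G$ directly. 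These are essentially two presentations of the same fact --- the Dirichlet Green's function automatically encodes the vertical mean-zero property of $u = -\dy\AA(\omega)$, so your route has the small advantage of not needing to cite the zero-mean condition separately. One minor imprecision: the displayed bound $\|\dx^{\alpha_1}u\|_{L^p} \leq \bigl(\sup_y \int_0^1 |K(y,z)|\,dz\bigr)\,\|\dx^{\alpha_1}\omega\|_{L^p}$ is, as stated, only the $L^1\to L^1$ and $L^\infty\to L^\infty$ bound; for intermediate $p$ one needs a Schur test and the constant should be $\bigl(\sup_y\!\int|K|\,dz\bigr)^{1/p'}\bigl(\sup_z\!\int|K|\,dy\bigr)^{1/p}$. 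Since both row and column integrals of $|K|$ are $\leq 1/2$ here, your final inequality still holds, but you should phrase the kernel estimate as a Schur (or interpolation) bound rather than an application of Minkowski's integral inequality alone.
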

\begin{proof}[Proof of Lemma~\ref{lemma:u:v:omega}]
Let us first prove the estimates on $u$. If $\alpha_2 \geq 1$, using $\omega = \dy u$ we have 
$$\partial^\alpha u = \dx^{\alpha_1} \dy^{\alpha_2} u = \dx^{\alpha_1} \dy^{\alpha_2 -1} \omega,$$ which proves \eqref{eq:u:w}. If $\alpha_2 = 0$, we use $\int_0^1 \dx^{\alpha_1} u(x,y) dy = 0$. Hence, using the Poincar\'e inequality in the $y$ variable, we have $\| \dx^{\alpha_1} u\|_{L^p} \leq C \| \dx^{\alpha_1} \dy u\|_{L^p}$, which proves \eqref{eq:u:w:Bad}.

In order to estimate $v$, note that when $\alpha_2 \geq 2$ we have 
$$\partial^\alpha v = \dx^{\alpha_1} \dy^{\alpha_2} v = \dx^{\alpha_1+1} \dy^{\alpha_2} \AA(\omega) =  - \dx^{\alpha_1+1} \dy^{\alpha_2-2} \omega,$$ which proves \eqref{eq:v:w}. On the other hand, if $\alpha_2=1$, we have $\dx^{\alpha_1} \dy^{\alpha_2} v = - \dx^{\alpha_1+1} u$, which can be estimated in $L^p$ via \eqref{eq:u:w:Bad}, thereby proving \eqref{eq:v:w:BAD}.
\end{proof}

For simplicity of the presentation we assume that for $(x,y)$ belonging to the {\em monotonicity} domain
\begin{align*}
\DDM = (-\infty, 1) \times (0,1) 
\end{align*} 
we have that 
  \begin{equation}
   0 < \sigma \leq  \dy \omega_0(x,y) \leq \sigma^{-1}
   \label{EQ01}
\end{equation}
for some $\sigma \in (0,1)$, while for $(x,y)$ in the 
{\em analyticity} domain
\begin{align*}
\DDA =  (0,\infty) \times (0,1) 
\end{align*} 
we have that $u_0$ is uniformly real analytic
with radius $\tau_0>0$, and analytic norm bounded by a constant $M>0$.

Following the notation in \cite{MasmoudiWong12b}, for $\sigma>0$ and $s\geq 4$ we consider the Rayleigh-modified Sobolev space
\begin{align*} 
H_{\sigma}^s (I) = \{ \omega \in H^s(I \times (0,1)) : \sigma \leq \dy \omega \leq \sigma^{-1} \}
\end{align*}
with the norm
\begin{align} 
\| \omega \|_{H^s_{\sigma}}^2 = \left\| \frac{ \dx^s \omega }{\sqrt{\dy \omega} } \right\|_{L^2}^2 + \sum_{|\alpha| \leq s, \alpha_1 \neq s} \| \partial^\alpha \omega \|_{L^2}^2 \label{eq:Hs:norm}.
\end{align}
Here $\alpha = (\alpha_1,\alpha_2)$, with $\alpha_1, \alpha_2 \geq 0$, is a two dimensional multi-index, and $\partial^\alpha = \dx^{\alpha_1} \dy^{\alpha_2}$. For functions in $H_\sigma^s$, it is clear that the norm $\| \cdot \|_{H^s_\sigma}$ and the usual Sobolev norm $\| \cdot \|_{H^s}$ are equivalent, with an equivalence constant that depends on $\sigma$.
Now, write
$I=(-\infty,1)$
as
$I = \bigcup_{j \in{\mathbb N}} I_j$ where
$I_j=(-j-1,-j+1)$.
Then define
\begin{align}
H^{s}_{\sigma,\rm uloc}(I) =
\Bigl\{ \omega \colon I \times (0,1) \to \RR,
    \sigma \leq \dy \omega \leq \sigma^{-1},
    \|\omega\|_{H^{s}_{\sigma,\rm uloc}(I)} = \sup_{j\in{\mathbb N}} \|\omega\|_{H^{s}_{\sigma}(I_j)} < \infty
   \Bigr\}.
\label{EQ08}
\end{align}
For the analytic part of the solution we use the notation from \cite{KukavicaTemamVicolZiane11}. For $\tau>0$, we  define the space of real analytic functions with the analyticity radius $\tau$ as 
\begin{align*} 
X_\tau (\DDA) = \left \{ \omega \in C^\infty(\DDA) : \| \omega \|_{X_\tau} < \infty \right\}
\end{align*}
where the analytic norm is defined as 
\begin{align} 
\| \omega \|_{X_\tau}^2 = \sum_{m\geq 0} \frac{\tau^{2m} (m+1)^2}{m!^2} \| \omega \|_{\dot{H}^m(\DDA)}^2.
\end{align}
Above and throughout the paper, $\dot{H}^m$  denotes the homogeneous Sobolev space with the (semi) norm 
\begin{align*}
\|\omega\|_{\dot{H}^m(\DDA)}^2 = \sum_{|\alpha|=m} \| \partial^\alpha \omega \|_{L^2(\DDA)}^2.
\end{align*}
It is also convenient to introduce the space
\begin{align*} 
Y_\tau (\DDA) = \{ \omega \in X_\tau : \| \omega \|_{Y_\tau} < \infty \},
\end{align*}
where
\begin{align} 
\| \omega \|_{Y_\tau}^2 = \sum_{m\geq 1} \frac{m \tau^{2m-1} (m+1)^2}{m!^2}\| \omega \|_{\dot{H}^m(\DDA)}^2 .
\end{align}

The first main theorem may then be stated as follows.

\begin{theorem}\label{thm:hydro:main}
Let $\sigma,\tau_0>0$ and $s\geq 4$. Assume that the initial vorticity satisfies a uniform Rayleigh condition on $\DDM$, we have $\omega_0 \in H^s_{2\sigma, \rm uloc}(\DDM)$, and $\omega_0$ is uniformly real analytic on $\DDA$ with radius of analyticity at least $\tau_0$, i.e., $u_0 \in Y_{\tau_0}(\DDA)$. Also assume that the initial velocity $u_0$ satisfies the compatibility condition  \eqref{eq:mean}. Then there exist  $T>0$ and a unique smooth solution $\omega \in C(0,T; X_{\tau_0/2}(\DDA) \cap  H^s_{\sigma,\rm uloc}(\DDM))$  of \eqref{eq:vorticity}--\eqref{eq:Biot-Savart} on $[0,T]$, which has zero vertical mean.
\end{theorem}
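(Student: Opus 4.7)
The proof will follow the same four-step pattern established in Section~\ref{sec:P:multiple:monotonicity} for the Prandtl system: build an analytic solution on $\DDA$, build a Rayleigh solution on the monotone region (first extended to all of $\RR\times(0,1)$), glue them on the overlap $(0,1)\times(0,1)$ by exploiting the finite speed of propagation in $x$, and then conclude uniqueness within the stated class. The vorticity formulation \eqref{eq:vorticity}--\eqref{eq:Biot-Savart} together with Lemma~\ref{lemma:u:v:omega} is the workhorse: equation \eqref{eq:vorticity} is purely transport in $(x,y)$, so we have a definite speed of propagation given by $\|u\|_{L^\infty}$, which by \eqref{eq:u:w:Bad} is controlled by $\|\omega\|_{H^s}$ on the monotone side and by the analytic norm on the other.

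For the analytic solution on $\DDA$, I would mirror the argument in \cite{KukavicaTemamVicolZiane11} but use interior analytic estimates that decouple the $X_\tau$ control from any lateral boundary at $x=0$. Concretely, one works with the vorticity equation multiplied by sums over $m$ of $\dx^{m}\omega\,\tau^{2m}(m+1)^2/(m!)^2$, integrated over $\DDA$. Since the nonlinearity $u\dx\omega+v\dy\omega$ can be estimated without ever integrating by parts in $x$ (the loss of one $x$-derivative in $v=\dx\AA(\omega)$ is absorbed into a sufficiently fast decrement of $\tau(t)$), the lateral boundary at $x=0$ costs nothing. This yields a solution $\omega^a\in L^\infty([0,T_a];X_{\tau(t)}(\DDA))\cap L^2((0,T_a);Y_{\tau(t)}(\DDA))$ with $\tau(0)=\eps\tau_0$ and $\tau(t)\geq \tau_0/2$ on $[0,T_a]$, constructed rigorously via a Picard iteration of the linearized heat-free transport equation. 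One may derive, as in Lemma~\ref{lem:vort:dy}, enough interior $L^\infty_{x,y}$ control on $u$ and $\dy\omega^a$ on strips $[\eps,\infty)\times(0,1)$ to feed into the gluing step.

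For the Rayleigh/monotone solution, I would first extend the data: using a standard Sobolev extension in $x$ across $x=1$, one produces $\omega_0^m\in H^s_{\sigma,\rm uloc}(\RR)$ with $\omega_0^m=\omega_0$ on $\DDM$ and $\dy\omega_0^m\geq\sigma$ uniformly (the uniform local norm at worst doubles and the Rayleigh constant is at most halved). Then one needs an analogue of Theorem~\ref{thm:Oleinik:global} for the hydrostatic Euler system: build $x$-periodic solutions on long intervals $\tilde I_j=(j-1,j+3)$ using the periodic result of \cite{MasmoudiWong12b}, verify that the existence time is uniform in $j$ by the uniform-in-$j$ bound on the data, and then glue these countably many periodic solutions using the finite speed of propagation in $x$, exactly as is done in the proof of Theorem~\ref{thm:Oleinik:global}. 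This produces $\omega^m\in L^\infty([0,T_m];H^s_{\sigma,\rm uloc}(\RR))$ satisfying \eqref{eq:vorticity}--\eqref{eq:Biot-Savart} (with the compatibility \eqref{eq:mean} preserved by the transport structure after subtracting the $y$-mean). The main obstacle here, and I expect it to be the genuinely non-routine point, is that \cite{MasmoudiWong12b} integrates by parts in $x$ to close the $H^s$-energy estimate controlling $\dx^s\omega/\sqrt{\dy\omega}$, so the uloc version requires truncation in $x$ whose commutators must be shown to be harmless; this is analogous to the Prandtl case but the elliptic Biot--Savart law \eqref{eq:Biot-Savart} couples horizontal scales nontrivially and must be handled by exploiting the vertical localization of $\AA(\omega)$.

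Gluing is done on the overlap $(0,1)\times(0,1)$, where at $t=0$ the two solutions coincide. Set $\tu=u^a-u^m$ and $\tom=\omega^a-\omega^m$ on a shrinking strip $I_t\times(0,1)$, where $I_t=(Mt,1-Mt)$ with $M$ chosen to dominate both $\|u^a\|_{L^\infty}$ (from the interior analytic bound) and $\|u^m\|_{L^\infty}$ (from $H^s_{\sigma,\rm uloc}$ via Lemma~\ref{lemma:u:v:omega}). The key quantity is $g=\tom-\tu\,\dy\omega^m/\omega^m_{\rm Ray}$, where I would use $\dy\omega^m$ in the denominator in place of $\omega^+$: since $\dy\omega^m\geq\sigma$ one may factor $\tu$ through $g$, exactly as in the Prandtl case and as in \cite{MasmoudiWong12b}. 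Taking the $L^2(I_t\times(0,1))$ inner product of the evolution equation for $g$ with $g$, the flux boundary terms at $x=Mt$ and $x=1-Mt$ are absorbed by the $-M$ surface contribution coming from $d/dt$ of a shrinking domain integral, while the interior terms are closed by a Gr\"onwall inequality using Poincar\'e in $y$ and the $L^\infty$ bounds on $\dy\omega^m,\dx\dy\omega^m,\dyy\omega^m$ afforded by $s\geq 4$. This yields $g\equiv 0$ on $[0,T^*]$ with $T^*=\min\{T_a,T_m,1/(2M)\}$, hence $\tu\equiv 0$ and $\tom\equiv 0$ there. The patched function $\omega=\omega^a$ on $\DDA$ and $\omega=\omega^m$ on $\DDM$ is then a well-defined solution in the asserted class, and uniqueness within the class follows by applying the very same $g$-argument to the difference of any two such solutions on their common analytic and monotone pieces.
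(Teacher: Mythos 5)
Your four-step architecture (analytic solution on $\DDA$, monotone solution by Sobolev extension plus uniform-local spaces and the Theorem~\ref{thm:Oleinik:global} mechanism, gluing on a shrinking $x$-strip, uniqueness by the same estimate) matches the paper exactly, and your discussion of the analytic side and the uloc Rayleigh solution agrees with what the paper does. The divergence is in the gluing quantity, and here I think your proposal has a genuine gap. You try to port the Prandtl corrected vorticity $g=\tom-\tu\,\partial_y\log\omega^+$ to hydrostatic Euler by replacing $\omega^+$ with the Rayleigh quantity $\dy\omega^m$. But the Prandtl $g$-trick rests on two structural features that are absent here: (a) the positivity hypothesis is on $\omega$ itself, so that $\tu$ can be reconstructed from $g$ via the no-slip condition $\tu(x,0)=0$, and (b) the boundary conditions $u|_{y=0}=0$ and $\dy\omega|_{y=0}=\dx P$ give $g$ a usable Robin condition at $y=0$ (and decay at $y=\infty$). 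In hydrostatic Euler the Rayleigh bound is on $\dy\omega$, not $\omega$, and the only boundary data is $v|_{y=0,1}=0$ together with the integral constraint $\int_0^1 u\,dy=0$; there is no no-slip and no Neumann condition on $\omega$, so your $g$ has no clean boundary condition at $y=0,1$, and the reconstruction of $\tu$ from $g$ cannot use a pointwise vanishing. Also, the derivation of a self-contained evolution equation for $g$ in Prandtl uses the simultaneous equations for $\tom$ and $\tu$; in the hydrostatic vorticity formulation the pressure term in the $u$-equation spoils the analogous bookkeeping.

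The paper's gluing argument is simpler and does not introduce any $g$-type correction. With $\Omega=\tilde\omega-\omega$, $U=\tilde u-u$, $V=\tilde v-v$, one differentiates the Rayleigh-weighted, shrinking-strip energy
\begin{equation*}
X(t)=\frac12\iint_{I_t\times(0,1)}\frac{\Omega^2}{\partial_y\omega}\,dx\,dy,\qquad I_t=(Mt,\,1-Mt).
\end{equation*}
The weight $1/\partial_y\omega$ is chosen precisely so that the dangerous term $V\partial_y\omega\cdot\Omega/\partial_y\omega$ collapses to $V\Omega$, and then the crucial cancellation is
\begin{equation*}
\iint_{I_t\times(0,1)}V\Omega\,dx\,dy=\iint V\,\partial_y U\,dx\,dy=-\iint \partial_y V\,U\,dx\,dy=\iint \partial_x U\,U\,dx\,dy=\frac12\iint\partial_x\bigl(U^2\bigr)\,dx\,dy,
\end{equation*}
where the $y$-boundary terms vanish because $v|_{y=0,1}=0$, and the resulting purely $x$-boundary terms are absorbed by the $-M/C$ surface contribution from the shrinking domain. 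The remaining terms are bounded by $C\,X(t)$ using $\|U\|_{L^2}\lesssim\|\Omega\|_{L^2}$ (Lemma~\ref{lemma:u:v:omega}) and $L^\infty$ control on $\partial_y\omega$, $\partial_x\omega$, $\partial_t\partial_y\omega$ from the Rayleigh solution. Gr\"onwall then gives $X\equiv0$ on $[0,t_0]$. This is the natural localization of the $H^s_\sigma$ uniqueness argument of~\cite{MasmoudiWong12b}; it uses the Rayleigh weight and the $v$-boundary conditions, not a corrected vorticity, and it avoids the boundary-condition obstructions your $g$ would face. I suggest replacing your gluing step with this weighted-$L^2$ energy.
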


\begin{remark} \label{thm:hydro:general} The above theorem can be generalized to allow
for regions where the initial vorticity is either increasing or decreasing. Let $\sigma,\tau_0>0$  and $s\geq 4$. Assume there exists open intervals $\{L_{i} \}_{i=1}^{n} \subset \RR$ such that on $\DD_{m,i} =L_{i} \times (0,1)$ the initial vorticity $\omega_0(x,y)$ is strictly monotone with respect to $y$, and either $\omega_0 \in H^s_{2\sigma,{\rm uloc}}(\DD_{m,i})$ or $- \omega_0 \in H^s_{2\sigma,{\rm uloc}}(\DD_{m,i})$ for all $i\in \{1,\ldots,n\}$. Assume also that there exist open intervals 
$\{J_j\}_{j=1}^{m} \subset \RR$ such that $(\cup_{j} J_j) \cup (\cup_i L_i) = \RR$ and such that on $\DD_{a,j} = J_j \times (0,1)$ the initial vorticity $\omega_0$ is uniformly real analytic with radius at least $\tau_0$, that is, $\omega_0 \in Y_{\tau_0}(\DD_{a,j})$ for all $j \in \{1,\ldots,m\}$. Assuming the initial velocity $u_0$ obeys the compatibility condition \eqref{eq:mean}, there exist $T>0$ and a smooth solution $\omega(t)$ of \eqref{eq:vorticity}--\eqref{eq:Biot-Savart} on $[0,T]$ which is monotone on the $\DD_{m,i}$  and is real analytic on $\DD_{a,j}$ .
We note that it is possible to accommodate for an infinite number of intervals , i.e., $m$ and $n$ may be $\infty$.
\end{remark}

Below we present the proof of Theorem~\ref{thm:hydro:main}, the proof of
the general case described in Remark~\ref{thm:hydro:general}, being completely analogous.
The argument closely follows the ideas in the Prandtl section, with the main work having to be done when glueing the real-analytic and the convex solutions.
To avoid redundancy, we omit the other details of the proof.

\begin{proof}[Proof of Theorem~\ref{thm:hydro:main}]
Let $\bar\omega_0$ denote a function which agrees with $\omega_0$ on
the monotonicity domain
${\mathcal D}_m=(-\infty,-1)\times(0,1)$ but which satisfies the
convexity condition \eqref{EQ01} on the whole strip ${\mathbb R}\times(0,1)$.
Denote by $\omega$ the solution of the initial value problem \eqref{eq:vorticity}--\eqref{eq:Biot-Savart} with initial data $\bar \omega_0$ on $\RR\times (0,1)$, obtained from Theorem~2.5 in 
\cite{MasmoudiWong12b} and the arguments used to prove Theorem~\ref{thm:Oleinik:global}, on some time interval $(0,t_0)$. Note that in order to construct the solution $\omega$, as in Section~\ref{sec:P:Oleinik} above, one needs to first use \cite{MasmoudiWong12b} to construct countably many solutions $\omega^{(j)}$ which are $(j-1,j+3)$ periodic, where $j \in \ZZ$; due to the uniform local Rayleigh condition on the initial data, all these solutions may be shown to live on a common time-interval, with a uniform in $j$ bound on their $H^{s}_\sigma$ norms; the next step is then to glue these solutions together, which is achieved using precisely the argument presented below in this section. To avoid redundancy with the proof of Theorem~\ref{thm:Oleinik:global}, we omit further details for the construction of the monotone solution $\omega$.

On the other hand, by Theorem~2.2 \cite{KukavicaTemamVicolZiane11}, there exists an analytic solution
$\tilde\omega$ on the spatial domain
${\mathcal D}_a$
also on the same time interval
$(0,t_0)$, without loss of generality.

Denote $U=\tilde u-u$,
$V=\tilde v-v$, and
$\Omega=\tilde \omega-\omega$. Then
  \begin{equation}
   \partial_{t}\Omega
   +\tilde u\partial_{x}\Omega
   +\tilde v\partial_{y}\Omega
   +U\partial_{x}\omega
   +V\partial_{y}\omega
   =0
   .
   \label{EQ02}
  \end{equation}
With $M>0$ to be determined, write
  \begin{equation*}
   I_t
   =(M t, 1-M t)
   .
  \end{equation*}
We now show that if $M$ is a sufficiently large constant, and $t_0$ is
sufficiently small, the solutions $\omega$ and
$\tilde\omega$ agree on 
$I_t \times(0,1)$ for all $t \in [0,t_0)$.
For this purpose, evaluate a
weighted
norm
of $\Omega(x,y,t)^2$ for $(x,y)\in I_t\times(0,1)$.
Differentiating the quantity
  \begin{equation*}
   X(t)
   = \frac12
    \iint_{I_t\times(0,1)}
      \frac{\Omega^2}{\partial_{y}\omega}
     dx dy
   = \frac12
    \int_{0}^{1}dy
     \int_{M t}^{1-M t}
      \frac{\Omega^2}{\partial_{y}\omega}
     dx,
  \end{equation*}
we obtain
  \begin{align}
   X'(t)
   &= 
   -\frac{M}{2}
   \int_{0}^{1}
    \frac{
     \Omega(1-M t,y,t)^2
        }{
     \partial_{y}\omega(1-M t,y,t)
    }
   dy
   -\frac{M}{2}
   \int_{0}^{1}
    \frac{
     \Omega(M t,y,t)^2
        }{
     \partial_{y}\omega(M t,y,t)
    }
   dy
   \nonumber\\&\indeq
   +
    \iint_{I_t\times(0,1)}
    \frac{
     \Omega\partial_{t}\Omega
        }{
     \partial_{y}\omega
    }
    dx dy
    -
    \iint_{I_t\times(0,1)}
    \frac{
     \Omega^2 \partial_t \partial_y \omega
        }{
     (\partial_{y}\omega)^2
    }
    dx dy
   \label{EQ04a}
  \end{align}
and thus
  \begin{align}
   X'(t)
   &= 
   -\frac{M}{2}
   \int_{0}^{1}
    \frac{
     \Omega(1-M t,y,t)^2
        }{
     \partial_{y}\omega(1-M t,y,t)
    }
   dy
   -\frac{M}{2}
   \int_{0}^{1}
    \frac{
     \Omega(M t,y,t)^2
        }{
     \partial_{y}\omega(M t,y,t)
    }
   dy
   \nonumber\\&\indeq
   -
    \iint_{I_t\times(0,1)}
    \frac{
     \tilde u \Omega \partial_{x} \Omega
        }{
     \partial_{y}\omega
    }
    dx dy
    -
    \iint_{I_t\times(0,1)}
    \frac{
     \tilde v \Omega\partial_{y}\Omega
        }{
     \partial_{y}\omega
    }
    dx dy
   \nonumber\\&\indeq
   -
    \iint_{I_t\times(0,1)}
    \frac{
     U\Omega\partial_{x}\omega
        }{
     \partial_{y}\omega
    }
    dx dy
   -
    \iint_{I_t\times(0,1)}
    V \Omega
    dx dy
   \nonumber\\&\indeq
    -
    \iint_{I_t\times(0,1)}
    \frac{
      \Omega^2 \partial_t \partial_{y}\omega
        }{
     (\partial_{y}\omega)^2
    }
    dx dy
    \nonumber\\&
    =
    I_1+I_2+I_3+I_4+I_5+I_6+I_7
   .
   \label{EQ04}
  \end{align}
For $I_1$ and $I_2$, we have
  \begin{equation*}
   I_1+I_2
   \le
   -\frac{M}{C}
   \int_{0}^{1}
    \Omega(1-M t,y,t)^2
   dy
   -\frac{M}{C}
   \int_{0}^{1}
    \Omega(M t,y,t)^2
   dy
  \end{equation*}
where $C$ denotes a sufficiently large generic constant
which may depend on $\sigma$ and the initial data.
In the term $I_3$, we integrate by parts in the $x$ variable
and obtain
  \begin{align}
   I_3
   &=
   \frac12
   \iint_{I_t\times(0,1)}
   \Omega^2
   \partial_{x}\left(
                \frac{
                 \tilde u
                    }{
                 \partial_{y}\omega
                }
               \right)
   dx dy
   -\frac12
   \int_{0}^{1}
   \Omega(1-M t,y,t)^2
   \frac{
    \tilde u(1-M t,y,t)
       }{
    \partial_{y}\omega(1-M t,y,t)
   }
   dy
   \nonumber\\&\indeq
   +\frac12
   \int_{0}^{1}
   \Omega(M t,y,t)^2
   \frac{
    \tilde u(M t,y,t)
       }{
    \partial_{y}\omega(M t,y,t)
   }
   dy
   \nonumber\\&
   \le
   C
   \Vert \Omega\Vert_{L^2(I_t\times(0,1))}^2
   +
   C\int_{0}^{1}
     \Omega(1-M t,y,t)^2
    dy
   +
   C\int_{0}^{1}
     \Omega(M t,y,t)^2
    dy
   .
   \label{EQ03}
  \end{align}
For $I_4$, we integrate by parts in the $y$ variable. 
No boundary terms appear due to vanishing of $\tilde v$
on $y=0$ and $y=1$, and we get
  \begin{align}
   I_4
    &=
    \frac12
     \iint_{I_t\times(0,1)}    
     \Omega^2 
     \partial_{y}
       \left(
         \frac{
          \tilde v
             }{
          \partial_{y}\omega
         }    
       \right)
     dx dy
   \le 
   C   \Vert \Omega\Vert_{L^2(I_t\times(0,1))}^2
   \label{EQ05}
  \end{align}
For $I_5$, we use 
$\Vert U\Vert_{L^2(I_t\times(0,1))}
\le C \Vert \Omega\Vert_{L^2(I_t\times(0,1))}$, and we obtain
  \begin{equation*}
   I_5
   \le
   C\Vert \Omega\Vert_{L^2}^2
  \end{equation*}
For $I_6$, we integrate by parts in $y$ and then in the $x$
variable. Again, since $v|_{y=0,1}=0$, the boundary terms only appear when integrating by
parts in the $x$ variable, and we get
  \begin{align}
   I_6
   &=
   -
   \iint_{I_t\times(0,1)}
    V\Omega
   dx dy
   = - \iint_{I_t\times(0,1)}
        V U_y
       dx dy
   =  \iint_{I_t\times(0,1)}
        V_y U
       dx dy
   \nonumber\\&
   =  -\iint_{I_t\times(0,1)}
       U_x U
       dx dy
   = -\frac12
     \int_{0}^{1}
     U(1-M t,y,t)^2
     dy
     +\frac12
     \int_{0}^{1}
     U(M t,y,t)^2
     dy
   .
   \label{EQ06}
  \end{align}
Finally, by using the equation obeyed by $\partial_t \partial_y \omega$, for $I_7$ we have
  \begin{align*}
   I_7
   \le
   C    \Vert \Omega\Vert_{L^2(I_t\times(0,1))}^2
  \end{align*}
  for a sufficiently large constant $C$.
Collecting all the estimates on all the terms in
\eqref{EQ04} then leads to
  \begin{equation*}
   X'(t)
   \le
   \left(
    C-\frac{M}{C}
   \right)
   \int_{0}^{1}
   \Omega(1-M t,y,t)^2
   dy
   +
   \left(
    C-\frac{M}{C}
   \right)
   \int_{0}^{1}
   \Omega(M t,y,t)^2
   dy
   + C X(t)
   .
  \end{equation*}
Now, choose $M$ so large that the first two terms on the right side
are negative. Then we get
$X'(t)\le C X(t)$
and thus $X(t)\equiv0$ for $t\le 1/C_0$,
where $C_0$ is a sufficiently large constant.
By reducing $t_0$ if necessary, we may assume that $t_0= 1/C_0$.
Now, define the solution
$\bar\omega$ on $(-\infty,\infty)\times(0,1)$ as follows:
Let $\bar\omega=\omega$ 
if $x<1-M t$ and $\bar\omega=\tilde \omega$ if $x\ge M t$.
For $M t<x<1-M t$, the solutions agree by the first part of the
proof. 
Therefore,
$\bar \omega$ provides a solution on the interval $(0,t_0)$.
Uniqueness follows in a similar fashion. 
\end{proof}

\subsection*{Acknowledgments} 
The work of IK was supported in part by
the NSF grant DMS-1311943, 
the work of NM was supported in part by the
NSF grant DMS-1211806,
while the work of VV was supported in part by the NSF grant DMS-1211828.

\end{document}